\newtheorem{thm}{Theorem}[section]
\newtheorem{cor}[thm]{Corollary}
\newtheorem{lem}[thm]{Lemma}
\newtheorem{rem}[thm]{Remark}
\newtheorem{exm}{Example}
\numberwithin{equation}{section}
\begin{document}

\title{Morse-Novikov cohomology of almost nonnegatively curved manifolds}

\author{Xiaoyang Chen\footnotemark[1]}
\renewcommand{\thefootnote}{\fnsymbol{footnote}}
\footnotetext[1]{School of Mathematical Sciences, Institute for Advanced Study, Tongji University, Shanghai, China. email: $xychen100@tongji.edu.cn$.}
 \date{}
\maketitle

\begin{abstract}
Let $M^n$ be a closed manifold of almost nonnegative sectional curvature and nonzero first de Rham cohomology group.
Using a topological argument, we show that the Morse-Novikov cohomology group $H^p(M^n,\theta)$ vanishes for any $p$
and $[\theta] \in H^1_{dR} (M^n), [\theta] \neq 0$.
Based on a new integral formula, we also show that a similar result holds for a closed manifold of almost nonnegative Ricci curvature
under the additional assumption that its curvature operator is uniformly bounded from below.
\end{abstract}

\section{Introduction}

Let $M^n$ be a smooth manifold and $\theta$ a real valued closed one form on $M^n$. Set $\Omega^p(M^n)$ the space of
real smooth $p$-forms and define $d_{\theta}: \Omega^p(M^n) \rightarrow \Omega^{p+1}(M^n)$ as $d_{\theta}   \alpha=d \alpha+ \theta \wedge \alpha$ for
$\alpha \in \Omega^p(M^n)$. Then we have a complex
$$\cdots \rightarrow \Omega^{p-1}(M^n)  \xrightarrow{d_{\theta}}  \Omega^{p}(M^n) \xrightarrow{d_{\theta}}  \Omega^{p+1}(M^n)  \rightarrow \cdots $$
whose cohomology $H^p (M, \theta)=H^p (\Omega^*(M^n), d_{\theta})$ is called the $p$-th Morse-Novikov cohomology group of $M^n$ with respect to $\theta$.
If $\theta_1, \theta_2 $ are two representatives in the cohomology class $[\theta]$, then $H^p ( M, \theta_1 )\simeq H^p (M, \theta_2)$.
Hence $H^p (M, \theta)$ only depends on the de Rham cohomology class of $\theta$. This cohomology shares many properties with the ordinary de Rham
cohomology. See \cite{HR, O, Paz} and section $2$ for details.

\par If $[\theta]=0$, the Novikov cohomology group $H^p(M, \theta)$ is isomorphic to the de Rham cohomology
group $H^p_{dR}(M^n)$. There are lots of work relating de Rham cohomology to curvature properties of Riemannian manifolds. See for example \cite{P}.
In particular, a celebrated theorem of Gromov says that the Betti number of a closed manifold with almost nonnegative sectional curvature
is bounded above by a constant depending only the dimension of the manifold \cite{Gr}. Here we say that a Riemannian manifold $M^n$ has
almost nonnegative sectional curvature if it admits a sequence of Riemannian metrics $g_i$ such that
$$sec (g_i) \geq -\frac{1}{i}$$
$$D(g_i)\leq 1,$$
where $sec (g_i)$ is the sectional curvature of $g_i$ and $D(g_i)$ is the diameter of $g_i$.

\par However, there are quite few work discussing the relationship between Morse-Novikov cohomology $H^p (M, \theta)$ and curvature when $[\theta] \neq 0$.
This paper is trying to make an attempt towards this direction. Our first result is the following theorem.
\begin{thm} \label{main}
Let $M^n$ be a closed Riemannian manifold of almost nonnegative sectional curvature and nonzero first de Rham
cohomology group, then the Morse-Novikov cohomology $H^p (M, \theta)=0$ for any  $p$ (including $p=0$) and any $[\theta] \in H^1_{dR}(M^n), [\theta] \neq 0$.
\end{thm}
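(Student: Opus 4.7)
My strategy is to pass to a finite cover on which the structure theory for almost nonnegatively curved manifolds supplies a fiber bundle with nilmanifold fiber over a simply connected base, and then use a Leray--Serre argument to reduce the Morse--Novikov vanishing on $M$ to the known vanishing on a closed nilmanifold with nontrivial twist. Concretely, I would invoke the topological structure theorem of Fukaya--Yamaguchi and Kapovitch--Petrunin--Tuschmann: after replacing $M$ by a finite cover $\pi : \tilde M \to M$, the manifold $\tilde M$ is diffeomorphic to the total space of a smooth fiber bundle $N \hookrightarrow \tilde M \xrightarrow{p} B$, with $N$ a closed nilmanifold and $B$ closed and simply connected. Since Morse--Novikov cohomology admits an averaging transfer for finite covers, the pullback $\pi^{\ast} : H^p(M,\theta) \to H^p(\tilde M, \pi^{\ast}\theta)$ is injective, so it suffices to prove $H^p(\tilde M, \pi^{\ast}\theta) = 0$ for all $p$.

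Pullback is injective on $H^1_{dR}$, so $[\pi^{\ast}\theta] \neq 0$. Because $B$ is simply connected, the classical Serre spectral sequence for $p : \tilde M \to B$ in total degree one shows that the fiber restriction $H^1(\tilde M, \mathbb R) \to H^1(N, \mathbb R)$ is injective, so $[\pi^{\ast}\theta|_N]$ is a nonzero class in $H^1_{dR}(N)$. I would then identify $H^{\ast}(\tilde M, \pi^{\ast}\theta)$ with the cohomology $H^{\ast}(\tilde M, \mathcal L_\theta)$ of $\tilde M$ with coefficients in the flat real line bundle $\mathcal L_\theta$ whose holonomy is $\gamma \mapsto \exp \int_\gamma \pi^{\ast}\theta$, and apply the Leray--Serre spectral sequence with local coefficients:
\[
E_2^{p,q} \;=\; H^p(B) \otimes H^q(N, \pi^{\ast}\theta|_N) \;\Longrightarrow\; H^{p+q}(\tilde M, \pi^{\ast}\theta),
\]
using that the simple connectedness of $B$ trivialises the local system $\mathcal H^q(N, \pi^{\ast}\theta|_N)$ on $B$. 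Thus the global vanishing will follow as soon as one has $H^q(N, \pi^{\ast}\theta|_N) = 0$ on the fiber.

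For the fiber step, I want $H^q(N,\eta) = 0$ for every $q$ whenever $N$ is a closed nilmanifold and $[\eta] \neq 0$ in $H^1_{dR}(N)$. Writing $N = \Gamma \backslash G$ with $G$ simply connected nilpotent, a twisted version of Nomizu's theorem identifies $H^{\ast}(N,\eta)$ with the Chevalley--Eilenberg cohomology of $\mathfrak g = \mathrm{Lie}(G)$ with coefficients in the one-dimensional representation determined by $\eta$. The latter is then handled by induction on the nilpotency class via the Hochschild--Serre spectral sequence for the central extension $0 \to \mathfrak z \to \mathfrak g \to \mathfrak g/\mathfrak z \to 0$, the base case of an abelian $\mathfrak g$ being a direct Fourier/linear-algebra computation showing $H^{\ast}(T^k,\eta) = 0$ for any nonzero constant $1$-form $\eta$.

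The principal obstacle I anticipate is the first step: invoking the topological structure theorem in a form strong enough to yield a genuine \emph{nilmanifold} (not merely infra-nil) fiber over a closed \emph{simply connected} base for which a Leray--Serre argument makes sense. This is a delicate piece of collapsing theory, and if the cleanest available statement produces only an infranil fiber or an Alexandrov base, one must first pass to a further finite cover and then justify a Leray--Serre-type spectral sequence in that generality. Once the bundle structure is available, the spectral sequence argument and the twisted Nomizu reduction on the fiber are largely mechanical; by contrast, degrees $p=0$ and $p=n$ drop out cheaply (from $d_\theta f = 0 \Rightarrow f \equiv 0$ and from Poincar\'e duality for $H^{\ast}(M,\theta)$), so there is nothing to do there beyond the spectral sequence.
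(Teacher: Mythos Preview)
Your strategy has the right overall shape---pass to a finite cover, use a spectral sequence to reduce to a vanishing statement involving nilpotent objects---but the specific structure theorem you invoke is not what Fukaya--Yamaguchi and Kapovitch--Petrunin--Tuschmann actually prove, and this is a genuine gap rather than a technicality. What those papers establish is that a closed manifold of almost nonnegative sectional curvature is an \emph{almost nilpotent space}: it has a finite cover $\hat M$ whose fundamental group is nilpotent and acts nilpotently on every $\pi_k(\hat M)$ (and hence on every $H_l(\widetilde M;\mathbb Z)$). They do \emph{not} produce a smooth fiber bundle $N\hookrightarrow \hat M\to B$ with nilmanifold fiber and closed simply connected base; no such fibration theorem is available in this generality. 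You correctly flag this as the principal obstacle, but you do not resolve it, and the remainder of the argument (Leray--Serre plus twisted Nomizu on the fiber), while sound on its own, rests entirely on this unavailable input.

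The paper sidesteps the problem by working directly with the almost nilpotent space property. After passing to a finite cover with nilpotent $G=\pi_1(\hat M)$, it identifies $H^p(\hat M,\theta)$ with local-coefficient cohomology $H^p(\hat M,\mathbb C_\rho)$ and applies the Cartan--Leray spectral sequence for the universal cover,
\[
E^2_{k,l}=H_k\bigl(G,\,H_l(\widetilde M;\mathbb C)\bigr)\ \Longrightarrow\ H_{k+l}(\hat M,\mathbb C_\rho),
\]
with $G$ acting diagonally via deck transformations and the character $\rho$. Since $\hat M$ is a nilpotent space, each $G$-module $H_l(\widetilde M;\mathbb Z)$ admits a finite $G$-invariant filtration with trivial successive quotients; after tensoring with $\mathbb C$ and twisting by the nontrivial $\rho$, each quotient becomes a sum of copies of the nontrivial rank-one $G$-module, whose group homology vanishes for finitely generated nilpotent $G$ (M$\breve{a}$cinic--Papadima). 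Induction along the filtration and the long exact sequence in group homology then kill every $E^2_{k,l}$, so $H^\ast(\hat M,\theta)=0$. In effect the paper runs your spectral-sequence idea with the roles reversed: the nilpotent ingredient lives on the group side (morally $K(G,1)$), not on a geometric fiber, and only the homotopy-theoretic nilpotency that the structure theorems actually deliver is needed.
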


From the work in \cite{FY92, KPT}, we know that a closed Riemannian manifold $M^n$ of almost nonnegative sectional curvature is an almost nilpotent space. Namely,
there is a finite cover of $M^n$, denoted by $\hat{M^n}$, such that $\pi_1(\hat{M^n})$ is a nilpotent group that operates nilpotently on $\pi_k(\hat{M^n})$ for every $k \geq 2$.
Recall that an action by automorphisms of a group $G$ on an abelian group $V$ is called nilpotent if $V$ admits a finite sequence of $G$-invariant subgroups
$$V=V_0 \supset V_1 \supset V_2 \supset \cdots \supset V_k =0$$
such that the induced action of $G$ on $V_j /V_{j+1}$ is trivial for any $j$. Now Theorem \ref{main} is a consequence of the following topological result.

\begin{thm} \label{nice}
Let $M^n$ be a smooth manifold with nonzero first de Rham cohomology group. If $M^n$ is an almost nilpotent space,
then the Morse-Novikov cohomology $H^p (M, \theta)=0$ for any $p$ and any $[\theta] \in H^1_{dR}(M^n), [\theta] \neq 0$.
\end{thm}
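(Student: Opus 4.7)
The plan is to reduce Theorem \ref{nice} to a Serre spectral sequence calculation on the Postnikov tower of an Eilenberg--MacLane space, combined with an induction on the nilpotency class of $\pi_1$. First, identify $H^p(M,\theta)$ with the cohomology of $M$ in the flat real line bundle $L_\chi$ of monodromy $\chi\colon\pi_1(M)\to\mathbb{R}^*$, $\gamma\mapsto\exp(\int_\gamma\theta)$, so that $[\theta]\neq 0$ becomes nontriviality of $\chi$. Then pass to the nilpotent finite cover $\pi\colon\hat M\to M$ provided by almost-nilpotency. The transfer $\mathrm{tr}\colon\Omega^*(\hat M)\to\Omega^*(M)$ (averaging over the Galois group of the covering) commutes with $d_\theta$ via the projection formula $\mathrm{tr}(\pi^*\theta\wedge\omega)=\theta\wedge\mathrm{tr}(\omega)$ and satisfies $\mathrm{tr}\circ\pi^*=\deg(\pi)\cdot\mathrm{id}$, so $\pi^*$ is injective on both $H^1_{dR}$ and Morse-Novikov cohomology. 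It therefore suffices to prove the theorem when $M$ itself is nilpotent.

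Next, I climb the principal Postnikov tower of the nilpotent $M$. The nilpotent-space hypothesis lets one refine the tower into principal fibrations $K(A_i,n_i)\to X_i\to X_{i-1}$ with trivial $\pi_1$-action on the fiber cohomology. Since $L_\chi$ is pulled back from $B\pi_1(M)$, at each stage the Serre spectral sequence reads
\[E_2^{p,q}=H^p(X_{i-1};L_\chi)\otimes H^q(K(A_i,n_i);\mathbb{R}),\]
and vanishing of the base cohomology propagates upward through the tower. This reduces the problem to showing $H^*(K(G,1);L_\chi)=0$ for $G=\pi_1(M)$ nilpotent and $\chi\colon G\to\mathbb{R}^*$ nontrivial.

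Finally, I induct on the nilpotency class of $G$. The base case $G=\mathbb{Z}^k$ follows from K\"unneth together with the elementary vanishing of $H^*(S^1;L_\psi)$ for nontrivial $\psi$. For the inductive step, choose a central extension $1\to Z\to G\to G'\to 1$ with $G'$ of smaller nilpotency class and use the Serre spectral sequence of $K(Z,1)\to K(G,1)\to K(G',1)$. If $\chi|_Z\neq 1$, the fiber cohomology with these coefficients already vanishes by the torus case, killing $E_2$; if $\chi|_Z=1$, then $\chi$ factors through a nontrivial character $\chi'$ of $G'$, the local system is pulled from the base, and the inductive hypothesis annihilates the base cohomology.

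The step I anticipate as most delicate is the Postnikov-tower reduction: one must exploit the fact that $M$ is a nilpotent space --- not merely that $\pi_1(M)$ is nilpotent --- to arrange triviality of the monodromy on each fiber's cohomology, which is precisely where the nilpotent action on higher homotopy is used. Once this refinement is in place, the remaining spectral sequence bookkeeping and the induction on nilpotency class are routine.
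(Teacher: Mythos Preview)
Your argument is correct and reaches the same conclusion, but the architecture differs from the paper's. After the shared preliminary steps (identifying $H^p(M,\theta)$ with twisted cohomology in a rank-one local system, and passing to a finite cover via transfer so that $M$ is genuinely nilpotent), the paper uses a single Cartan--Leray spectral sequence
\[
E^2_{kl}=H_k\bigl(\pi_1(M),\,H_l(\widetilde M;\mathbb{C})\bigr)\ \Rightarrow\ H_{k+l}(M;\mathbb{C}_\rho),
\]
together with the fact (from Hilton--Mislin--Roitberg) that a nilpotent space has nilpotent $\pi_1$-action on $H_*(\widetilde M;\mathbb{Z})$. Filtering each $H_l(\widetilde M;\mathbb{Z})$ by $G$-invariant subgroups with trivial successive quotients reduces the $E^2$-term to direct sums of $H_k(G;\mathbb{C}_\rho)$, whose vanishing the paper imports from M\u{a}cinic--Papadima. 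By contrast, you iterate the Serre spectral sequence up the refined principal Postnikov tower, and you supply your own proof of $H^*(K(G,1);L_\chi)=0$ by induction on the nilpotency class via central extensions. Your route is a bit longer (many spectral sequences instead of one) but is more self-contained, since you do not need to cite an external vanishing theorem for nilpotent group cohomology; the paper's route is shorter but leans on that citation. The point you flag as delicate---arranging trivial monodromy on the fiber cohomology at each Postnikov stage---is exactly the place where the paper invokes the nilpotency of the $\pi_1$-action on $H_*(\widetilde M)$, so the two arguments exploit the nilpotent-space hypothesis in essentially the same way, just packaged differently.
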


 For a smooth manifold which is $not$ an almost nilpotent space, its Morse-Novikov cohomology does  not  necessarily vanish as the following example shows.
\begin{exm} \label{exm} \cite{KPT}
Let $h: \mathbb{S}^3 \times \mathbb{S}^3 \rightarrow \mathbb{S}^3 \times \mathbb{S}^3 $ be defined by
$$ h: (x,y) \rightarrow (xy, yxy).$$
This map is a diffeomorphism with inverse given by
$$h^{-1}: (u,v) \rightarrow (u^2 v^{-1}, vu^{-1}).$$
Let $M$ be the mapping torus of $h$.  Then $M$ has the structure of a fiber bundle:
$$\mathbb{S}^3 \times \mathbb{S}^3 \rightarrow M \rightarrow \mathbb{S}^1.$$
 The induced map $h^{*,3}$ on $H^3_{dR}( \mathbb{S}^3 \times \mathbb{S}^3)$ is given by the matrix
 \begin{equation}
A_h=\left( \begin{array}{ll}{1} & {1} \\ {1} & {2}\end{array}\right)
\end{equation}
Notice that the eigenvalues of $A_h$ are different from $1$ in absolute value. Hence $M^n$ is $not$ an almost nilpotent space.
 Let $\lambda$ be a eigenvalue of $A_h$ with $\lambda=e^{-t}, t\neq 0, t\in \mathbb{R}$ and $\theta$ a generator of $H^1_{dR}(M)$.  We claim that $H^3(M, t \theta)\neq 0$.
To see this, observe that $t \theta$ defines a linear representation of the fundamental group of $M$:
$$\rho_t: \pi_1(M) \rightarrow GL(1, \mathbb{C})= \mathbb{C}^*, [\gamma] \mapsto e^{t \int_{\gamma} \theta}. $$
The representation $\rho_t$ defines a complex rank one local system $\mathbb{C}_{\rho_t}$ over $M^n$ \cite{Di}. We denote
by  $H^p(M^n, \mathbb{C}_{\rho_t})$ the $p$-th cohomology group of $M^n$ with coefficients in this local system.
By Theorem  \ref{iso} in section $2$, for any $p$, we have
$$H^p (M, t \theta)\simeq H^p (M^n, \mathbb{C}_{\rho_t}).$$
On the other hand, by Wang's exact sequence in Proposition 6.4.8 in \cite{Di} page 212, we have
$$\dim_{\mathbb{C}} H^p(M^n, \mathbb{C}_{\rho_t})=\dim_{\mathbb{C}} \ker(h^{*,p}-e^{-t}Id) + \dim_{\mathbb{C}} coker (h^{*, p-1}-e^{-t}Id),$$
where $h^{*, p}: H^p(\mathbb{S}^3 \times \mathbb{S}^3, {\mathbb{C}}) \rightarrow H^p (\mathbb{S}^3 \times \mathbb{S}^3, {\mathbb{C}})$ is
the linear map induced by $h$. As $e^{-t}$ is an eigenvalue of $h^{*,3}$,
we see that $\dim_{\mathbb{C}} \ker( h^{*,3}-e^{-t} Id)>0$ and $H^3(M, t \theta)\neq 0$.
\end{exm}

By Theorem  \ref{basic} in section $2$, we see that $\sum_{p=0}^n (-1)^p dim H^p (M^n, \theta)$ is equal to the Euler characteristic number of $M^n$. Hence we get the following
\begin{cor}
Let $M^n$ be a smooth manifold with nonzero first de Rham cohomology group. If $M^n$ is an almost nilpotent space, then its Euler characteristic number vanishes.
\end{cor}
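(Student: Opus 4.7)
The plan is to combine Theorem \ref{nice} with the Euler characteristic formula stated as Theorem \ref{basic}. Both ingredients are already in hand, so the corollary will follow essentially immediately.

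First I would use the hypothesis $H^1_{dR}(M^n) \neq 0$ to select some nonzero de Rham class $[\theta] \in H^1_{dR}(M^n)$, and fix a closed one-form $\theta$ representing it. Since $M^n$ is an almost nilpotent space, Theorem \ref{nice} applies directly and yields $H^p(M, \theta) = 0$ for every $p$ (including $p = 0$). In particular every term of the alternating sum $\sum_{p=0}^n (-1)^p \dim H^p(M^n, \theta)$ equals zero.

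Next I would invoke Theorem \ref{basic} from section 2, which identifies this alternating sum with the Euler characteristic of $M^n$. Combining the two statements forces $\chi(M^n) = 0$, which is the conclusion.

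There is no genuine obstacle here: the corollary is a clean consequence of the two previously stated results, and the only small point to highlight is that Theorem \ref{basic} yields an \emph{$[\theta]$-independent} formula for $\chi(M^n)$, so applying it with any single nonzero class suffices. The only thing worth making sure of is that Theorem \ref{basic} is indeed stated in the generality needed (i.e.\ for an arbitrary closed one-form $\theta$ on the smooth manifold $M^n$, not only under curvature hypotheses), since Theorem \ref{nice} is itself stated in this purely topological setting.
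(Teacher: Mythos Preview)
Your proposal is correct and matches the paper's own argument exactly: the paper simply notes that by Theorem~\ref{basic} the alternating sum $\sum_{p=0}^n (-1)^p \dim H^p(M^n,\theta)$ equals the Euler characteristic, so the vanishing from Theorem~\ref{nice} forces $\chi(M^n)=0$. Your caveat about compactness in Theorem~\ref{basic} is well taken, since that hypothesis is indeed present there but only implicit in the corollary's statement.
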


Theorem \ref{main} fails for closed manifolds of almost nonnegative Ricci curvature. Recall that a Riemannian manifold has almost nonnegative Ricci curvature
if it admits a sequence of Riemannian metrics $g_i$ such that
$$Ric (g_i) \geq -\frac{n-1}{i}$$
$$D(g_i) \leq 1,$$
where $Ric(g_i)$ is the Ricci curvature of $g_i$ and $D(g_i)$ is the diameter of $g_i$.  Let $M^4$ be the manifold performing surgery along a meridian curve in
$T^4$, i.e, removing a tubular neighborhood of the curve and attaching a copy of $D^2 \times S^2$. In \cite{An92}, Anderson showed that
$M^4$ admits a sequence of Riemannian metrics $g_i$ such that
$$|Ric (g_i)| \leq \frac{n-1}{i}$$
$$D(g_i) \leq 1.$$
Moreover, its fundamental group is isomorphic to $\mathbb{Z}^3$ and its Euler characteristic number is nonzero.
For any $[\theta] \in H^1_{dR}(M^4), [\theta] \neq 0$, by Theorem \ref{basic} and Theorem \ref{h1} in section $2$, we get $H^p (M^4, \theta)=0$ for $p \neq 2$ and $H^2(M^4, \theta)\neq 0$.
However, the sectional curvature of $g_i$ constructed by Anderson can $not$ have a uniform lower bound. Otherwise,
there will be also an upper bound of the sectional curvature and by Theorem $1$ in \cite{Ya}, $M^4$ will fiber over $S^1$ which is impossible by the construction.
 In particular, the curvature operator of $g_i$ can $not$ have a uniform lower bound.
By the following Theorem \ref{ricci} and its Corollary \ref{and}, $M^4$ in fact can $not$ admit a sequence of Riemannian metrics $g_i$
of almost nonnegative Ricci curvature with curvature operator uniformly bounded from below.

\begin{thm} \label{ricci}
Let $M^n$ be a closed Riemannian manifold with nonzero first de Rham
cohomology group and admits a sequence of Riemannian metrics $g_i$ such that
$$Ric (g_i) \geq -\frac{n-1}{i}$$
$$D(g_i) \leq 1.$$
If the curvature operator of $g_i$ is uniformly bounded from below by $-Id$, then for any $[\theta] \in H^1_{dR}(M^n), [\theta] \neq 0$,
there exists some $t \in \mathbb{R}, t \neq 0$ such that $H^p (M, t \theta)=0$ for any $p$, where $H^p (M, t \theta)$ is the Morse-Novikov cohomology group with respect to
$t \theta$.
\end{thm}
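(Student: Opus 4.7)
The plan is to run a pointwise Bochner--Weitzenb\"ock argument for the Morse--Novikov Laplacian $\Delta_{t\theta_i}$, combined with the Hopf maximum principle, choosing $t$ large compared to the relevant curvature and gradient bounds for the $g_i$-harmonic representative $\theta_i$ of $[\theta]$. The two hypotheses enter in complementary ways: the Ricci lower bound forces $\theta_i$ to be almost parallel, and in particular nowhere zero with $|\theta_i|^{2}\geq c_{i}^{2}>0$; the curvature operator bound controls the Weitzenb\"ock curvature $\mathcal{R}_{p}$ from below on every $\Lambda^{p}T^{*}M$ by a dimensional constant via Meyer's inequality.

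First I prepare $\theta_i$. For each $i$, let $\theta_i\in[\theta]$ be the $g_i$-harmonic representative. The classical Bochner identity for harmonic $1$-forms combined with $\mathrm{Ric}(g_i)\geq-(n-1)/i$ gives
\[\int_{M}|\nabla\theta_i|^{2}\,dv_{g_i}\leq\frac{n-1}{i}\int_{M}|\theta_i|^{2}\,dv_{g_i},\]
and $\tfrac{1}{2}\Delta|\theta_i|^{2}=|\nabla\theta_i|^{2}+\mathrm{Ric}(\theta_i,\theta_i)\geq-\tfrac{n-1}{i}|\theta_i|^{2}$, so by the strong maximum principle (together with unique continuation) and $\theta_i\not\equiv 0$, one has $|\theta_i|>0$ everywhere; the Cheng--Yau/Li--Yau Harnack inequality then furnishes a pointwise lower bound $|\theta_i|^{2}\geq c_{i}^{2}>0$ on $M$.

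Expanding $d_{t\theta_i}=d+t\theta_i\wedge$ together with its $L^{2}$-adjoint, and using Cartan's formula $dL^{*}+L^{*}d=\mathcal{L}_{X_i}$ with $X_i=\theta_i^{\sharp}$, produces the Bochner--Weitzenb\"ock identity
\[\Delta_{t\theta_i}=\nabla^{*}\nabla+\mathcal{R}_{p}+t(\mathcal{L}_{X_i}+\mathcal{L}_{X_i}^{*})+t^{2}|\theta_i|^{2}.\]
Meyer's inequality applied to $\mathrm{Curv}(g_i)\geq-\mathrm{Id}$ gives $\mathcal{R}_{p}\geq-C_{n,p}\mathrm{Id}$, while $\mathrm{div}(X_i)=-d^{*}\theta_i=0$ means the symmetric part of the Lie derivative is pointwise dominated by $|\nabla\theta_i|$, so $|\langle(\mathcal{L}_{X_i}+\mathcal{L}_{X_i}^{*})\alpha,\alpha\rangle|\leq 2p|\nabla\theta_i|\,|\alpha|^{2}$. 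For $\alpha\in\ker\Delta_{t\theta_i}$, setting $u=|\alpha|^{2}$ and substituting $\nabla^{*}\nabla\alpha$ from $\Delta_{t\theta_i}\alpha=0$ into $\tfrac{1}{2}\Delta u=|\nabla\alpha|^{2}-\langle\nabla^{*}\nabla\alpha,\alpha\rangle$ produces the pointwise \emph{new integral formula}
\[\Delta u\;\geq\;\bigl(2t^{2}|\theta_i|^{2}-2C_{n,p}-4pt|\nabla\theta_i|\bigr)u.\]

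At a point $x_{0}\in M$ where $u$ attains its maximum, the Hopf principle gives $\Delta u(x_{0})\leq 0$, so the formula forces $2t^{2}c_{i}^{2}\leq 2C_{n,p}+4pt|\nabla\theta_i(x_{0})|$ unless $u(x_{0})=0$. For a fixed $i$, $B_{i}:=\sup_{M}|\nabla\theta_i|$ is finite by compactness of $M$, and any $t$ strictly larger than $\bigl(pB_{i}+\sqrt{p^{2}B_{i}^{2}+c_{i}^{2}\max_{0\le p\le n}C_{n,p}}\bigr)/c_{i}^{2}$ makes the displayed inequality impossible, forcing $u(x_{0})=0$, hence $\alpha\equiv 0$, and $H^{p}(M,t\theta)=0$ for every $p$. \textbf{Main obstacle.} The delicate point is the absorption of the cross term $t(\mathcal{L}_{X_i}+\mathcal{L}_{X_i}^{*})$: the Bochner estimate for $\theta_i$ controls $\nabla\theta_i$ only in $L^{2}$, whereas the maximum principle argument needs pointwise information. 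For a single smooth $g_i$ compactness of $M$ is enough, but the curvature operator lower bound is truly essential for Meyer's inequality on each $\Lambda^{p}T^{*}M$ (and, via the Lichnerowicz-type Weitzenb\"ock formula for $\nabla\theta_i$, for any quantitative elliptic regularity needed to make the estimates uniform in $i$).
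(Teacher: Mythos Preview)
Your argument has a genuine gap where you assert $|\theta_i|>0$ everywhere. The inequality $\tfrac{1}{2}\Delta|\theta_i|^{2}\geq -\tfrac{n-1}{i}|\theta_i|^{2}$ reads $(\Delta+\tfrac{2(n-1)}{i})\,|\theta_i|^{2}\geq 0$, and the zeroth-order coefficient is \emph{positive}, so the Hopf strong minimum principle does not apply. A one-variable model: $u=1+\cos x$ on $S^1$ satisfies $u''\geq -2u$ yet vanishes at $x=\pi$. Unique continuation only rules out infinite-order vanishing, not isolated zeros of a harmonic form, and the Cheng--Yau/Li--Yau Harnack step presupposes the very positivity you are trying to establish.

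The gap is fatal rather than cosmetic. For a single fixed metric $g_i$ your maximum-principle argument uses only compactness of $M$: the Weitzenb\"ock term $\mathcal R_p$ and $B_i=\sup_M|\nabla\theta_i|$ are finite for any smooth metric, with no curvature hypothesis needed. It would therefore show that \emph{every} closed manifold with $b_1>0$ admits some $t$ with $H^{p}(M,t\theta)=0$ for all $p$, hence $\chi(M)=0$ by Theorem~\ref{basic}(3). But Anderson's $M^{4}$ discussed in the introduction has $b_1=3$ and $\chi(M^4)\neq 0$; indeed, since $M^4$ does not fiber over $S^1$, every closed $1$-form on it---harmonic or not, for any metric---has a zero.

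The paper's proof is built precisely to avoid any pointwise lower bound on $|\theta_i|$. It works integrally: Corollary~\ref{key2} gives $\int t_i^2|X_i|^{2}|\alpha_i|^{2}\leq C_n\int t_i|\nabla X_i|\,|\alpha_i|^{2}$; the Ricci bound makes $\|\nabla X_i\|_{L^2}=O(i^{-1/2})\|X_i\|_{L^2}$; the curvature-operator bound enters through Moser iteration (Theorem~\ref{sbl}) to control $|\alpha_i|_\infty$ and $|X_i|_\infty$; and a Poincar\'e inequality (Lemma~\ref{bb}) compares $\int|X_i|^{2}|\alpha_i|^{2}$ with $\overline{|X_i|^{2}}\int|\alpha_i|^{2}$. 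With the normalization $t_i^2=V(g_i)/\int|X_i|^{2}$ one obtains $\int|\alpha_i|^{2}\leq\tfrac{1}{2}\int|\alpha_i|^{2}$ for $i$ large, so $\alpha_i\equiv 0$. Both curvature hypotheses and the passage to large $i$ are genuinely used.
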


\begin{cor} \label{and}
Let $M^n$ be a closed Riemannian manifold with nonzero first de Rham
cohomology group. If $M^n$ admits a sequence of Riemannian metrics of almost nonnegative Ricci curvature with curvature operator uniformly bounded from below,
then the Euler characteristic number of $M^n$ vanishes.
\end{cor}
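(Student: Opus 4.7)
The plan is to derive Corollary \ref{and} as an essentially immediate consequence of Theorem \ref{ricci} combined with the Euler characteristic interpretation of Morse-Novikov cohomology announced via Theorem \ref{basic} earlier in the introduction. Since the statement only asks for vanishing of $\chi(M^n)$, we need one nonzero de Rham class $[\theta]$ and one value of $t$ for which all Morse-Novikov groups $H^p(M, t\theta)$ vanish; an alternating sum then does the rest.

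Concretely, first I would use the hypothesis $H^1_{dR}(M^n) \neq 0$ to fix any nonzero class $[\theta] \in H^1_{dR}(M^n)$. Next, I would apply Theorem \ref{ricci} to the given sequence of metrics $g_i$: if the uniform lower bound on the curvature operator is $-C \cdot \mathrm{Id}$ rather than exactly $-\mathrm{Id}$, a rescaling $g_i \mapsto C g_i$ replaces the curvature operator bound by $-\mathrm{Id}$, replaces the diameter bound $1$ by $\sqrt{C}$, and replaces $-(n-1)/i$ by $-(n-1)/(Ci)$, so after reindexing the hypotheses of Theorem \ref{ricci} hold verbatim (the precise diameter constant plays no role in its conclusion, as the class $[\theta]$ and the smooth manifold $M^n$ are unchanged). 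Theorem \ref{ricci} then supplies a real number $t \neq 0$ such that $H^p(M, t\theta) = 0$ for every $p$. Since $t \neq 0$ and $[\theta] \neq 0$, the class $[t\theta] \in H^1_{dR}(M^n)$ is still nonzero, so $t\theta$ is a legitimate choice of closed one-form in the Morse-Novikov setup.

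Finally, invoking Theorem \ref{basic} (which gives $\chi(M^n) = \sum_{p=0}^n (-1)^p \dim H^p(M, t\theta)$ regardless of the representative $t\theta$), every term on the right-hand side is zero, hence $\chi(M^n) = 0$. I do not expect any real obstacle here: the only subtlety is the rescaling argument used to reduce an arbitrary uniform lower bound on the curvature operator to the normalized bound $-\mathrm{Id}$ appearing in Theorem \ref{ricci}, and this is purely formal since sectional/curvature-operator bounds scale homogeneously under constant metric rescalings while the topological content of Morse-Novikov cohomology is unaffected.
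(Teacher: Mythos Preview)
Your argument is correct and matches the paper's intended (but not explicitly written) derivation: the corollary is stated immediately after Theorem \ref{ricci} without proof, precisely because it follows at once by combining Theorem \ref{ricci} with part (3) of Theorem \ref{basic}, exactly as you do. Your rescaling remark is a reasonable way to pass from an arbitrary uniform lower bound $-C\cdot\mathrm{Id}$ on the curvature operator to the normalized bound $-\mathrm{Id}$; note that strictly speaking the diameter then becomes $\sqrt{C}$ rather than $1$, so one must either observe (as you do) that the proof of Theorem \ref{ricci} is insensitive to the specific diameter constant, or simply remark that the curvature-operator bound enters that proof only through the Bochner estimate \eqref{13}, where any uniform constant yields the same conclusion.
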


It has been known that the fundamental group of a closed manifold $M^n$ of almost nonnegative Ricci curvature is almost nilpotent \cite{KW}. By Theorem \ref{h1},
$H^1 (M, \theta)=0$ for any $[\theta] \neq 0$ without any additional assumption. See \cite{KL} for related work on noncollapsed  almost Ricci flat manifolds.
\par
Finally, we point out that for a closed Riemannian manifold $M^n$ of nonnegative Ricci curvature and nonzero first de Rham
cohomology group, then the Morse-Novikov cohomology $H^p (M, \theta)=0$ for any $p$ and $[\theta] \in H^1_{dR}(M^n), [\theta] \neq 0$.
This follows from the Cheeger-Gromoll splitting theorem \cite{CG} and Theorem \ref{basic}.

\par The proof of Theorem \ref{nice} is based on Cartan-Leray spectral sequence on equivalent homology \cite{Br}.
By passing to a finite cover, we can assume that $M^n$ is a nilpotent space.
The closed one form $\theta$ on $M^n$ defines a linear representation of the fundamental group of $M^n$:
$$\rho: \pi_1(M^n) \rightarrow GL(1, \mathbb{C})= \mathbb{C}^*, [\gamma] \mapsto e^{\int_{\gamma} \theta}. $$
The representation $\rho$ defines a complex rank one local system $\mathbb{C}_{\rho}$ over $M^n$ \cite{Di}. We denote
by $H^p(M^n, \mathbb{C}_{\rho})$ the $p$-th cohomology group of $M^n$ with coefficients in this local system.
 By Theorem \ref{iso} in section $2$, for any $p$, we have
$$H^p(M^n, \theta) \simeq H^p(M^n, \mathbb{C}_{\rho}).$$
By duality, it suffices to show that $H_p(M^n, \mathbb{C}_{\rho})=0$, where $H_p(M^n, \mathbb{C}_{\rho})$
is the $p$-th homology group of $M^n$ with coefficients in this local system.
Let $\pi: \widetilde{M}^n \rightarrow M^n$ be the universal cover of $M^n$.
By the Cartan-Leray spectral sequence \cite{Br}, we have
\begin{equation} \label{sp}
E^2_{kl}=H_k(\pi_1(M^n), H_l(\widetilde{M}^n,  \mathbb{C})) \Rightarrow H_{k+l}(M^n, \mathbb{C}_{\rho}),
\end{equation}
where $H_k(\pi_1(M^n), H_l(\widetilde{M}^n, \mathbb{C}))$ is the $k$-th homology group of $\pi_1(M^n)$ with coefficients in the
$\pi_1(M^n)$-module $H_l(\widetilde{M}^n,  \mathbb{C})$.
Then we prove by induction to get the vanishing of $H_p(M^n, \mathbb{C}_{\rho})$.

\par  The proof of Theorem \ref{ricci} is based on Hodge theory of Morse-Novikov cohomology.
Let $d^*$ be the formal $L^2$ adjoint of $d$ with respect to the Riemannian metric $g_i$. We can also define an operator $d_{\theta}^*$ as
the formal $L^2$ adjoint of $d_{\theta}$ with respect to $g_i$. Further, $\Delta_{\theta}=d_{\theta} d_{\theta}^*+ d_{\theta}^*d_{\theta}$
 is the corresponding Laplacian. These operators are lower-order perturbations of the corresponding operators in the usual Hodge-de Rham theory
and therefore have much the same analytic properties. For example, the usual proof of the Hodge decomposition theorem goes through,
and one obtains an orthogonal decomposition
$$\Omega^p(M^n)=\mathcal{H}^p(M^n) \oplus d_{\theta} (\Omega^{p-1}(M^n))\oplus d_{\theta}^* (\Omega^{p+1}(M^n)),$$
where $\mathcal{H}^p(M^n)$ is the space of $\Delta_{\theta}$ harmonic forms, which is isomorphic to $H^p(M^n, \theta)$.
\par By Hodge theory, for each $i$ we can choose a harmonic form $\theta_i$ in the cohomology class $[\theta]$.
Let $V(g_i)$ be the volume of $(M^n, g_i)$, $dV_i$  the volume form of $g_i$ and $X_i$ the dual vector field of $\theta_i$
defined by $g_i(X_i, Y)=\theta (Y).$  Set $t_i=(\frac{V(g_i)}{\int_{M^n} |X_i|^2 dV_i})^{1/2}>0$. Choose a $\Delta_{t_i \theta_i}$
harmonic form $\alpha_i$ in $H^p(M^n, t_i\theta_i)$.
The idea is to show that $\alpha_i \equiv 0$ for sufficiently large $i$, which relies on the following crucial integral inequality proved in Corollary \ref{key2}.
\begin{equation} \label{point}
\int_{M^n}  t_i^2 |X_i|^2 |\alpha_i|^2 dV_i \leq C_n \int_{M^n}  (t_i |\nabla X_i|+ t_i|div (X_i)|)|\alpha_i|^2 dV_i
\end{equation}
for some constant $C_n$ depending only on $n$.
\par  As $Ric(g_i) \geq -\frac{n-1}{i}$,
 applying Bochner formula to $X_i$, we get
\begin{equation} \label{001}
\int_{M^n} |\nabla X_i|^2 dV_i \leq \frac{n-1}{i} \int_{M^n} |X_i|^2 dV_i.
\end{equation}
 Combing \ref{point} and \ref{001}, for sufficiently large $i$ we will show
 $$ \int_{M^n} |\alpha_i|^2 dV_i \leq  \frac{1}{2}\int_{M^n} |\alpha_i|^2 dV_i.$$
Hence $\alpha_i \equiv 0$.  See section $5$ for details.

\section*{Acknowledgements} The author is partially supported by National Natural Science Foundation of China No.11701427
and Institute for Advanced Study, Tongji University no. 8100141347. He thanks
Professor Binglong Chen, John Lott and Andrei Pajitnov for helpful discussions.

\section{Basic properties of Morse-Novikov cohomology}
In this section we collect some basic properties of Morse-Novikov cohomology.
\begin{thm} \label{basic}
Let $M^n$ be a compact $n$-dimensional manifold and $\theta$ a closed one form on $M^n$. Then:
\\
\\(1) If $\theta' =\theta + df, f \in C^{\infty}(M^n, \mathbb{R})$, then for any $p$, we have $H^p(M^n, \theta') \simeq H^p(M^n, \theta)$
and the isomorphism is given by the map $[\alpha] \mapsto [e^f \alpha]$;
\\
\\(2) If $[\theta] \neq 0$ and $M^n$ is connected and orientable, then $H^0(M^n, \theta)$ and  $H^n(M^n, \theta)$ vanish.
 Moreover, the integration $\int: H^p (M^n, \theta)\times H^{n-p}(M^n, -\theta), (\alpha, \beta) \mapsto \int_{M^n} \alpha \wedge \beta$
 induces an isomorphism $H^p (M^n, \theta)\simeq (H^{n-p}(M^n, -\theta))^*$.
\\
\\ (3) $\sum_{p=0}^n (-1)^p dim H^p (M^n, \theta)$ is equal to the Euler characteristic number of $M^n$;
\\
\\ (4) If $N^d$ be a $d$-dimensional manifold and $\gamma$ be a closed one form on $N^d$, then we have
$H^k(M^n \times N^d, \pi_1^* \theta + \pi_2^*\gamma)\simeq \bigoplus_{p+q=k} H^p(M^n, \theta) \bigotimes H^q(N^d, \gamma)$,
 where $\pi_1: M^n \times N^d \rightarrow M^n, \pi_2: M^n \times N^d \rightarrow N^d$ are the projection maps.
\\
\\ (5) If $\pi: \widehat{M}^n \rightarrow M^n$ is a covering space with finite sheet,
then $\pi^*: H^p(M^n, \theta) \rightarrow H^p (\widehat{M}^n, \pi^* \theta)$ is injective for any $p$.

\end{thm}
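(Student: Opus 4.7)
Parts (1) and (4) are routine chain-level verifications. For (1), a direct application of the Leibniz rule gives
\begin{equation}
d_\theta(e^f\alpha) = e^f\,df\wedge\alpha + e^f d\alpha + e^f\theta\wedge\alpha = e^f d_{\theta+df}\alpha,
\end{equation}
so multiplication by $e^f$ is a chain isomorphism $(\Omega^*,d_{\theta'})\to(\Omega^*,d_\theta)$ with inverse multiplication by $e^{-f}$, and passes to cohomology. For (4), a similar computation on decomposables (using $\pi_2^*\gamma\wedge\pi_1^*\alpha = (-1)^{\deg\alpha}\pi_1^*\alpha\wedge\pi_2^*\gamma$) gives
\begin{equation}
d_{\pi_1^*\theta+\pi_2^*\gamma}(\pi_1^*\alpha\wedge\pi_2^*\beta) = \pi_1^*(d_\theta\alpha)\wedge\pi_2^*\beta + (-1)^{\deg\alpha}\pi_1^*\alpha\wedge\pi_2^*(d_\gamma\beta),
\end{equation}
so the Eilenberg--Zilber quasi-isomorphism between $\Omega^*(M^n)\otimes\Omega^*(N^d)$ and $\Omega^*(M^n\times N^d)$ carries over to the twisted complexes and the usual K\"unneth argument applies (Morse--Novikov cohomology of a compact manifold is finite-dimensional by Hodge theory, so no flatness issues arise).

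For (2), a $d_\theta$-closed $0$-form $f$ satisfies the linear ODE $df = -f\theta$ along smooth curves; by uniqueness either $f\equiv 0$ or $f$ is nowhere zero on the connected $M^n$, and in the latter case orientability and connectedness let me write $\theta = -d\log|f|$ globally, contradicting $[\theta]\neq 0$. For the duality pairing, if $\alpha = d_\theta\gamma$ with $\gamma$ a $(p-1)$-form and $d_{-\theta}\beta = 0$, then
\begin{equation}
d(\gamma\wedge\beta) = (d_\theta\gamma-\theta\wedge\gamma)\wedge\beta + (-1)^{p-1}\gamma\wedge\theta\wedge\beta = d_\theta\gamma\wedge\beta = \alpha\wedge\beta,
\end{equation}
using $(-1)^{p-1}\gamma\wedge\theta = \theta\wedge\gamma$; Stokes' theorem then forces $\int_{M^n}\alpha\wedge\beta = 0$, so the wedge pairing descends to $H^p(M^n,\theta)\times H^{n-p}(M^n,-\theta)$. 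Non-degeneracy follows from Hodge theory for the twisted Laplacian $\Delta_\theta$: $\Delta_\theta$ is elliptic with finite-dimensional kernel isomorphic to $H^p(M^n,\theta)$, and the Hodge star intertwines $\Delta_\theta$-harmonic $p$-forms with $\Delta_{-\theta}$-harmonic $(n-p)$-forms in a way that turns the wedge pairing into the positive-definite $L^2$ inner product. The vanishing of $H^n(M^n,\theta)$ is then immediate from duality together with the $H^0$ vanishing applied to $-\theta$.

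Part (3) follows because $d_\theta - d$ is a zeroth-order bundle operator; consequently the rolled-up operator $d_\theta+d_\theta^*:\Omega^{\mathrm{even}}(M^n)\to\Omega^{\mathrm{odd}}(M^n)$ is elliptic with the same principal symbol as $d+d^*$, hence the same Atiyah--Singer index $\chi(M^n)$, and by Hodge theory this index equals $\sum_{p=0}^n(-1)^p\dim H^p(M^n,\theta)$. For (5), given a $k$-sheeted cover $\pi:\widehat{M}^n\to M^n$, I define a fiberwise transfer $\pi_*:\Omega^p(\widehat{M}^n)\to\Omega^p(M^n)$ by summing local pushforwards $((\pi|_{U_i})^{-1})^*$ over the sheets on trivializations $\pi^{-1}(U) = \bigsqcup_{i=1}^k U_i$; the identity $\pi^*\theta|_{U_i} = (\pi|_{U_i})^*\theta$ immediately yields $\pi_*\circ d_{\pi^*\theta} = d_\theta\circ\pi_*$, and the pointwise identity $\pi_*\circ\pi^* = k\cdot\mathrm{id}$ descends to cohomology, forcing $\pi^*$ to be injective. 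The main obstacle across the five parts is the non-degeneracy of the duality pairing in (2): descent of the pairing is a direct sign calculation, but perfectness is the one step that leaves the purely algebraic framework of $d_\theta$ and requires the full Hodge decomposition for $\Delta_\theta$ together with the compatibility of the Hodge star with the twisting $\theta\leftrightarrow-\theta$.
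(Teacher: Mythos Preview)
Your arguments are correct. The paper itself defers parts (1)--(4) entirely to the references \cite{HR} and \cite{O}, so your direct chain-level verifications (the $e^f$-conjugation for (1), the ODE argument for $H^0$ together with twisted Hodge duality for (2), the principal-symbol/index argument for (3), and Eilenberg--Zilber plus K\"unneth for (4)) fill in material the paper does not spell out. One small remark on (2): orientability is not actually needed to write $\theta=-d\log|f|$ once $f$ is nowhere zero on a connected manifold; connectedness alone forces constant sign. Orientability enters only for the integration pairing.

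For part (5) there is a genuine, if mild, difference in route. The paper first invokes Theorem~\ref{iso} to identify $H^p(M^n,\theta)$ with sheaf cohomology $H^p(M^n,\mathbb{C}_\rho)$ of the rank-one local system, and then cites the standard transfer map for local-system cohomology of a finite cover to obtain $h\pi^*=k\cdot\mathrm{Id}$. You instead build the transfer directly at the level of forms by summing over sheets, check that it commutes with $d_{\pi^*\theta}$ and $d_\theta$, and conclude $\pi_*\pi^*=k\cdot\mathrm{Id}$ on the nose. Both are the same transfer argument in substance; your version is more elementary and self-contained (it avoids the detour through Theorem~\ref{iso} and the local-system formalism), while the paper's version has the advantage that the transfer for local coefficients is a standard topological fact one can cite without further computation.
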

\begin{proof}
See page 476-480 in \cite{HR} and Proposition 1.2 in \cite{O} for the proof of parts 1-4. For part 5, by Theorem \ref{iso}, we have
$$H^p(M^n, \theta) \simeq H^p(M^n, \mathbb{C}_{\rho}),$$
where $\mathbb{C}_{\rho}$ is the complex rank one local system defined by the linear representation
$$\rho: \pi_1(M^n) \rightarrow GL(1, \mathbb{C})= \mathbb{C}^*, [\gamma] \mapsto e^{\int_{\gamma} \theta} $$
and $H^p(M^n, \mathbb{C}_{\rho})$ is the $p$-th cohomology group of $M^n$ with coefficients in this local system.
\par As $\pi: \widehat{M}^n \rightarrow M^n$ is a covering space with finite sheet, one can construct a transfer map (see e.g. \cite{Go})
$h: H^p(\widehat{M}^n, \pi^*\mathbb{C}_{\rho} ) \rightarrow H^p (M^n, \mathbb{C}_{\rho})$ such that $h \pi^*=k Id$ , where
$k$ is the degree of $\pi$.
It follows that $\pi^*: H^p(M^n, \theta)\simeq H^p (M^n, \mathbb{C}_{\rho}) \rightarrow H^p(\widehat{M}^n, \pi^*\mathbb{C}_{\rho})
\simeq H^p(\widehat{M}^n, \pi^* \theta)$ is injective.

\end{proof}

As a corollary of Theorem \ref{basic}, we get
\begin{exm} \label{ex}
Let $M^n$ be $n$-dimensional torus, then
$H^p(M^n, \theta)=0$ for any $p$ and $[\theta] \neq 0$ by Theorem \ref{basic}.
\end{exm}

Let $\theta$ be a closed one form on $M^n$. Consider the following linear representation of the fundamental group of $M^n$:
$$\rho: \pi_1(M^n) \rightarrow GL(1, \mathbb{C})= \mathbb{C}^*, [\gamma] \mapsto e^{\int_{\gamma} \theta}. $$
The representation $\rho$ defines a complex rank one local system $\mathbb{C}_{\rho}$ over $M^n$ \cite{Di}. We denote
by  $H^p(M^n, \mathbb{C}_{\rho})$ the $p$-th cohomology group of $M^n$ with coefficients in this local system.

\begin{thm}  \label{iso}
$H^p(M^n, \theta) \simeq H^p(M^n, \mathbb{C}_{\rho})$ for any $p$.
\end{thm}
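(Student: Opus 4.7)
The plan is to exhibit the Morse--Novikov complex $(\Omega^*(M^n), d_\theta)$ as a fine resolution of the local system $\mathbb{C}_\rho$, so that its cohomology is, by general sheaf-theoretic nonsense, exactly $H^p(M^n, \mathbb{C}_\rho)$.

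First, I would establish a twisted Poincar\'e lemma for $d_\theta$. On any contractible open $U \subset M^n$, since $\theta|_U$ is closed, one may write $\theta|_U = d\phi$ for some $\phi \in C^\infty(U, \mathbb{R})$. The direct computation
$$d(e^\phi \alpha) = e^\phi \bigl( d\phi \wedge \alpha + d\alpha \bigr) = e^\phi\, d_\theta \alpha$$
shows that multiplication by $e^\phi$ is an isomorphism of complexes $(\Omega^*(U), d_\theta) \to (\Omega^*(U), d)$. Hence the former has vanishing cohomology in positive degrees, and in degree zero its local solutions are precisely $\{c\, e^{-\phi} : c \in \mathbb{C}\}$, a one-dimensional complex vector space.

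Second, I would identify the kernel sheaf $\mathcal{F}$ on $M^n$ defined by $\mathcal{F}(U) = \{f \in C^\infty(U, \mathbb{C}) : d_\theta f = 0\}$ with $\mathbb{C}_\rho$. By Step 1, $\mathcal{F}$ is locally isomorphic to the constant sheaf $\mathbb{C}$, hence a complex rank-one local system. To pin down the monodromy, pick a good cover $\{U_\alpha\}$ of $M^n$ and primitives $\theta|_{U_\alpha} = d\phi_\alpha$; a local generator of $\mathcal{F}$ on $U_\alpha$ is $e^{-\phi_\alpha}$, and on overlaps the transition is multiplication by the locally constant function $e^{\phi_\alpha - \phi_\beta}$. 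Tracking these transitions along a loop $\gamma$ accumulates to $e^{\int_\gamma \theta} = \rho([\gamma])$, so $\mathcal{F} \simeq \mathbb{C}_\rho$ as local systems on $M^n$.

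Finally, I would conclude via sheaf cohomology. Each sheaf of smooth $p$-forms on $M^n$ is fine (it admits $C^\infty$ partitions of unity), and the complex $(\Omega^*(M^n), d_\theta)$ is exact in positive degrees by Step 1. Therefore it is a fine resolution of $\mathcal{F} \simeq \mathbb{C}_\rho$, and the standard homological-algebra argument yields
$$H^p(M^n, \theta) = H^p \bigl( \Omega^*(M^n), d_\theta \bigr) \simeq H^p(M^n, \mathbb{C}_\rho).$$
The main substantive step is the monodromy identification in Step 2 --- one must carefully verify that the local \v{C}ech cocycle $\{e^{\phi_\alpha - \phi_\beta}\}$ really represents the character $[\gamma] \mapsto e^{\int_\gamma \theta}$ --- while the Poincar\'e lemma of Step 1 and the fine-resolution argument of Step 3 are routine adaptations of the classical de Rham theorem.
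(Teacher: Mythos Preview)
Your argument is correct and is a standard route to this isomorphism, but it is organized differently from the paper's proof. The paper passes through the universal cover $\pi:\widetilde{M}^n\to M^n$: it first invokes (without details, citing the analogy with the sheaf-theoretic proof of de~Rham's theorem) that $H^p(M^n,\mathbb{C}_\rho)$ is computed by the complex of $\rho$-equivariant forms on $\widetilde{M}^n$, and then exhibits an explicit chain isomorphism $F:\Omega^*(M^n)\to\Omega^*(\widetilde{M}^n,\rho)$, $F(w)=e^h\pi^*w$, where $dh=\pi^*\theta$. Your proof instead stays on $M^n$ and shows directly that $(\Omega^*,d_\theta)$ is a fine resolution of $\mathbb{C}_\rho$ via a local twisted Poincar\'e lemma plus a \v{C}ech monodromy computation. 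The underlying mechanism is the same---conjugation by $e^{\phi}$ where $d\phi=\theta$---but you use local primitives and sheaf cohomology, while the paper uses a global primitive on the simply connected cover and an explicit chain map. Your version is arguably more streamlined (one step rather than two) and makes the identification $\ker d_\theta\simeq\mathbb{C}_\rho$ transparent; the paper's version has the advantage of a concrete global isomorphism of complexes, which can be useful when one wants to track specific classes. One small caution: in your Step~2 the sign of the monodromy depends on the convention for how $\rho$ defines $\mathbb{C}_\rho$, so the ``careful verification'' you flag is genuinely needed to match the paper's convention $\rho([\gamma])=e^{\int_\gamma\theta}$.
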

\begin{proof}
The proof is contained in \cite{Paz}.  For the convenience of the reader, we provide the details here.
Let $\pi : \widetilde{M}^n \rightarrow M^n$ be the universal cover of $M^n$.
The cohomology groups $H^p(M^n, \mathbb{C}_{\rho})$
are isomorphic to $H^p_{\rho}(\widetilde{M}^n)$, the cohomology groups of the complex $\Omega(\widetilde{M}^n, \rho)$, consisting of the $\rho$-equivariant
differential forms on $\widetilde{M}^n$ relative to the usual differential (the proof is analogous to the
sheaf-theoretic proof of de Rham's theorem). Let $h$ be a function on $\widetilde{M}^n$  such that
$dh =\pi^* \theta$. We give a mapping $F : \Omega^*(M^n) \rightarrow \Omega^*(\widetilde{M}^n, \rho)$ by the formula
$F(w)=e^h \pi^*w$. It is easy to see that $F$ is one-to-one and commutes with the differentials. Hence
$$H^p(M^n, \theta) \simeq H^p_{\rho}(\widetilde{M}^n) \simeq H^p(M^n, \mathbb{C}_{\rho}).$$
\end{proof}

\begin{thm} \label{h1}
Let $M^n$ be a $n$-dimensional manifold and $\theta$ a closed one form on $M^n$. If the fundamental group of $M^n$ has a finitely generated nilpotent subgroup of finite index,
then $H^1(M^n, \theta)=H^{n-1}(M^n, \theta)=0$ for any $[\theta] \neq 0$.
\end{thm}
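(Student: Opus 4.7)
My strategy is to reduce the theorem to the vanishing of $H^1(G,\mathbb{C}_\rho)$ for $G$ a finitely generated nilpotent group and $\rho$ a nontrivial character, and then to prove that vanishing by induction on the nilpotency class. Three preliminary reductions set things up: the $H^{n-1}$ case reduces to the $H^1$ case by Theorem \ref{basic}(2) applied to $-\theta$, which gives $H^{n-1}(M^n,\theta)\cong H^1(M^n,-\theta)^*$; Theorem \ref{basic}(5) together with injectivity of $\pi^*$ on $H^1_{dR}$ for a finite cover lets me pass to the finite cover $\widehat{M}^n\to M^n$ whose fundamental group is the given finitely generated nilpotent subgroup, so I may assume $G:=\pi_1(M^n)$ is itself finitely generated nilpotent; and Theorem \ref{iso} rewrites $H^1(M^n,\theta)\cong H^1(M^n,\mathbb{C}_\rho)$ for $\rho(\gamma)=e^{\int_\gamma\theta}$, a character $G\to\mathbb{R}_{>0}\subset\mathbb{C}^*$ that is nontrivial because a nonexact real closed $1$-form has nonzero period on some loop.

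Next I would use the Cartan--Leray spectral sequence of the universal cover,
$$E_2^{p,q}=H^p(G,H^q(\widetilde{M}^n,\mathbb{C}))\Rightarrow H^{p+q}(M^n,\mathbb{C}_\rho),$$
with $G$ acting on $H^q(\widetilde{M}^n,\mathbb{C})$ by deck transformations tensored with $\rho$. Because $\widetilde{M}^n$ is simply connected, Hurewicz and the universal coefficient theorem give $H^1(\widetilde{M}^n,\mathbb{C})=0$, so $E_2^{p,1}=0$ and only $E_2^{1,0}=H^1(G,\mathbb{C}_\rho)$ survives on the diagonal $p+q=1$. Consequently $H^1(M^n,\mathbb{C}_\rho)\cong H^1(G,\mathbb{C}_\rho)$, and the problem becomes purely group-theoretic.

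To show $H^1(G,\mathbb{C}_\rho)=0$ I would induct on the nilpotency class of $G$. For the abelian base case, $G\cong\mathbb{Z}^r\oplus F$ with $F$ finite, and since $\mathbb{R}_{>0}$ has no nontrivial finite subgroups, $\rho|_F$ is trivial; iterating K\"unneth on the $\mathbb{Z}$-factors reduces everything to the elementary computation $H^*(\mathbb{Z},\mathbb{C}_\lambda)=0$ for $\lambda\neq 1$, which forces the full tensor product to vanish as soon as any $\mathbb{Z}$-coordinate carries a nontrivial character. For the inductive step, I would apply the Lyndon--Hochschild--Serre spectral sequence to $1\to Z(G)\to G\to G/Z(G)\to 1$: if $\rho|_{Z(G)}\neq 1$ the abelian case kills $H^*(Z(G),\mathbb{C}_\rho)$ and the entire $E_2$-page vanishes; if $\rho|_{Z(G)}=1$, then $\rho$ descends to a nontrivial character on the smaller-class quotient $G/Z(G)$, the inductive hypothesis gives $H^1(G/Z(G),\mathbb{C}_\rho)=0$, and the other edge term $H^1(Z(G),\mathbb{C}_\rho)^{G/Z(G)}=\mathrm{Hom}(Z(G),\mathbb{C})^{G/Z(G)}$ in the five-term exact sequence vanishes because $G/Z(G)$ acts on it by scalar multiplication through the descended nontrivial character, so $H^1(G,\mathbb{C}_\rho)=0$ follows by exactness.

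The main obstacle is this inductive step: one must correctly identify the twisted $G/Z(G)$-action on $H^1(Z(G),\mathbb{C})$ when $\rho$ is trivial on the centre, and then verify that its invariants vanish. Everything else is a formal application of tools already in the excerpt (Theorems \ref{basic} and \ref{iso}) combined with standard Cartan--Leray, Lyndon--Hochschild--Serre, Hurewicz, and K\"unneth arguments.
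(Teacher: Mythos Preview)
Your proposal is correct and follows the same overall architecture as the paper: reduce $H^{n-1}$ to $H^1$ via Poincar\'e duality (Theorem~\ref{basic}(2)), pass to the finite cover with nilpotent fundamental group via Theorem~\ref{basic}(5), identify $H^1(M,\theta)$ with $H^1(G,\mathbb{C}_\rho)$ in group cohomology, and then kill the latter. The two proofs differ only in how they execute the last two steps. For the identification $H^1(\widehat{M}^n,\mathbb{C}_\rho)\cong H^1(G,\mathbb{C}_\rho)$, the paper invokes the classifying map $\widehat{M}^n\to K(G,1)$ being an $H^1$-isomorphism, whereas you obtain the same conclusion from the Cartan--Leray spectral sequence together with $H^1(\widetilde{M}^n,\mathbb{C})=0$; these are equivalent. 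For the vanishing of $H^1(G,\mathbb{C}_\rho)$, the paper simply cites Theorem~2.2 of \cite{MP}, which in fact gives $H^p(G,\mathbb{C}_\rho)=0$ for \emph{all} $p$; you instead supply a self-contained inductive proof via the Lyndon--Hochschild--Serre spectral sequence for $1\to Z(G)\to G\to G/Z(G)\to 1$, treating separately the cases $\rho|_{Z(G)}\ne 1$ and $\rho|_{Z(G)}=1$. Your analysis of the twisted $G/Z(G)$-action on $H^1(Z(G),\mathbb{C})=\mathrm{Hom}(Z(G),\mathbb{C})$ in the second case---that conjugation on the centre is trivial so the action is scalar multiplication by the descended nontrivial character, forcing the invariants to vanish---is correct and is exactly the crux you flagged. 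The payoff of your route is a more elementary, citation-free argument tailored to degree one; the paper's citation buys the stronger all-degree vanishing with less work.
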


\begin{proof}
Let $G \subseteq \pi_1(M^n)$ be a finitely generated nilpotent subgroup of finite index and $\pi:\widehat{M}^n \rightarrow M^n$ the covering
space of $M^n$ with
$\pi_1( \widehat{M}^n)\simeq G$. The closed one form
$\pi^*\theta$ defines a linear representation of $G$:
$$\rho: G=\pi_1(\widehat{M}^n) \rightarrow GL(1, \mathbb{C})= \mathbb{C}^*, [\gamma] \mapsto e^{\int_{\gamma} \pi^*\theta}. $$
The representation $\rho$ defines a complex rank one local system $\mathbb{C}_{\rho}$ over $\widehat{M}^n$.
We denote by  $H^p(\widehat{M}^n, \mathbb{C}_{\rho})$ the $p$-th cohomology group of $\widehat{M}^n$ with coefficients in the local system  $\mathbb{C}_{\rho}$.
Let $K(G,1)$ be the topological space such that $\pi_1(K(G,1))=G, \pi_i (K(G,1))=0, i \geq 2$ and $\mathbb{L}_{\rho}$ the
complex rank one local system  over $K(G,1)$ defined by $\rho$. Since the classifying map $\widehat{M}^n \rightarrow K(G,1)$ induces over $\mathbb{Q}$ a
cohomology isomorphism in degree one, we get
$$H^1(\widehat{M}^n, \mathbb{C}_\rho)\simeq H^1(K(G,1), \mathbb{L}_{\rho}).$$
As $\pi:\widehat{M}^n \rightarrow M^n$ is a finite cover, $[\theta] \neq 0$ implies that $[\pi^* \theta] \neq 0$.
Then $\mathbb{L}_{\rho}$ is a nontrivial local system over $K(G,1)$.
As $G$ is a finitely generated nilpotent group, by Theorem 2.2 in \cite{MP}, for any $p$, we have
$$H^p(K(G,1), \mathbb{L}_{\rho})=0.$$
In particular,
$$H^1(\widehat{M}^n, \mathbb{C}_\rho) \simeq H^1(K(G,1), \mathbb{L}_{\rho}) =0.$$
By Theorem \ref{basic} and Theorem \ref{iso}, we have
$$H^1(\widehat{M}^n, \pi^* \theta)=0$$
$$H^1(M^n, \theta)=0$$
$$H^{n-1}(M^n, \theta)\simeq H^1(M^n, -\theta)=0.$$
\end{proof}

\section{Cartan-Leray spectral sequence}
In this section we apply Cartan-Leray spectral sequence to prove Theorem \ref{nice}.
 By passing to a finite cover, we can assume that $M^n$ is a nilpotent space.
 The closed one form $\theta$ induces a  linear representation of $G=\pi_1(M^n)$:
$$\rho: \pi_1(M^n) \rightarrow GL(1, \mathbb{C})= \mathbb{C}^*, [\gamma] \mapsto e^{\int_{\gamma} \theta}. $$
By Theorem \ref{iso}, for any $p$, we have
$$
H^p(M^n, \theta)\simeq H^p(M^n, \mathbb{C}_{\rho}),
$$
where $\mathbb{C}_\rho$ is the complex rank one local system over $M^n$ defined by $\rho$.
By duality, it suffices to prove the vanishing of $H_p(M^n, \mathbb{C}_{\rho})$,
which is the homology group of $M^n$ with coefficients in the local system  $\mathbb{C}_{\rho}$.
Let $\widetilde{M}^n$ be the universal cover of $M^n$.
The representation $\rho$ together with the $G$ action on $\widetilde{M}^n$ by deck transformation induces the diagonal action on
 $H_l(\widetilde{M}^n,  \mathbb{C})\simeq H_l(\widetilde{M}^n, \mathbb{Z}) \otimes  \mathbb{C}$. By the Cartan-Leray spectral sequence
 ( Theorem 7.9, page 173 in \cite{Br}), we have
$$E^2_{kl}=H_k(G, H_l(\widetilde{M}^n,  \mathbb{C})) \Rightarrow H_{k+l}(M^n, \mathbb{C}_{\rho}),$$
where $H_k(G, H_l(\widetilde{M}^n, \mathbb{C}))$ is the $k$-th homology group of $G$ with coefficients in the $G$-module $H_l(\widetilde{M}^n,  \mathbb{C})$.
See \cite{Br} for more details of homology of groups. For us, we only need the following long exact sequence (Proposition 6.1, page 71 in \cite{Br}).
\begin{lem} \label{short}
For any short exact sequence $0 \rightarrow M' \rightarrow M \rightarrow M''\rightarrow 0$ of $G$-modules, there is the following long exact sequence:
$$\cdots \rightarrow H_i (G, M') \rightarrow H_i (G, M)  \rightarrow H_i (G, M'') \rightarrow H_{i-1} (G, M') \rightarrow H_{i-1} (G, M) \rightarrow \cdots$$
$$\rightarrow H_1 (G, M') \rightarrow H_1 (G, M) \rightarrow H_1 (G, M'')  \rightarrow H_0 (G, M') \rightarrow H_0 (G, M) \rightarrow H_0 (G, M'') \rightarrow 0.$$
\end{lem}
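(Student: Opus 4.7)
The plan is to recognize this as the standard long exact sequence of left derived functors that defines group homology, rather than to attempt any novel argument. My starting point would be the identification $H_i(G, M) = \mathrm{Tor}_i^{\mathbb{Z}G}(\mathbb{Z}, M)$, where $\mathbb{Z}$ is the trivial left $\mathbb{Z}G$-module; equivalently, $H_i(G, -)$ is the $i$-th left derived functor of the coinvariants functor $M \mapsto M_G := \mathbb{Z} \otimes_{\mathbb{Z}G} M$. This agrees with Brown's definition in \cite{Br} and is the convention implicit in the Cartan--Leray spectral sequence already invoked.

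First I would fix a projective (for concreteness, free) resolution $P_\bullet \to \mathbb{Z} \to 0$ of the trivial $\mathbb{Z}G$-module. Each $P_i$ is flat over $\mathbb{Z}G$, so tensoring the given short exact sequence of $G$-modules with $P_i$ preserves exactness in every degree. This produces a short exact sequence of chain complexes
$$0 \longrightarrow P_\bullet \otimes_{\mathbb{Z}G} M' \longrightarrow P_\bullet \otimes_{\mathbb{Z}G} M \longrightarrow P_\bullet \otimes_{\mathbb{Z}G} M'' \longrightarrow 0.$$
Applying the snake (zig-zag) lemma for short exact sequences of chain complexes then delivers a long exact sequence in homology. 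Because $H_i(G, N)$ is by definition the $i$-th homology of $P_\bullet \otimes_{\mathbb{Z}G} N$, this is exactly the stated sequence, with connecting homomorphisms $\partial_i : H_i(G, M'') \to H_{i-1}(G, M')$.

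The only loose end is the termination on the right with $H_0(G, M) \to H_0(G, M'') \to 0$. This is immediate from the right exactness of $\mathbb{Z} \otimes_{\mathbb{Z}G} -$: the surjection $M \twoheadrightarrow M''$ induces a surjection $M_G \twoheadrightarrow M''_G$, and there are no negative-degree Tor groups to continue the sequence. There is no serious obstacle here; the statement is the universal long exact sequence attached to any homological $\delta$-functor. The one mildly fussy point, if one insists on reproducing the derivation in full, is matching Brown's unnormalized bar-resolution conventions to the abstract Tor description above. In practice I would simply quote Proposition 6.1 of \cite{Br} verbatim, as the authors do.
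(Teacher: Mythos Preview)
Your proposal is correct and matches the paper's treatment exactly: the paper does not give an independent proof but simply cites Proposition~6.1 on page~71 of \cite{Br}, precisely as you suggest doing at the end. Your outline via $H_i(G,-)=\mathrm{Tor}_i^{\mathbb{Z}G}(\mathbb{Z},-)$ and the snake lemma is the standard argument behind that reference, so there is nothing to add.
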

As $M^n$ is a nilpotent space, then
$G=\pi_1(M^n)$ is a nilpotent group that operates nilpotently on $\pi_m (M^n)$ for every $m \geq 2$. By Lemma 2.18 in \cite{HMR},
$G$ operates nilpotently on $H_l (\widetilde{M}^n, \mathbb{Z})$ for every $l$, that is $V=H_l(\widetilde{M}^n, \mathbb{Z})$
admits a finite sequence of $G$-invariant subgroups
$$V=V_0 \supseteq V_1 \supseteq \ldots V_k=0$$
such that the induced action of $G$ on $V_j / V_{j+1}$ is trivial for any $j$.
The representation $\rho$ of $G$ induces a diagonal action on $V_j \otimes \mathbb{C}$ and
we have the following short exact sequence of $G$ modules:
$$0 \rightarrow V_{j+1} \otimes \mathbb{C} \rightarrow V_j \otimes \mathbb{C} \rightarrow V_j /  V_{j+1} \otimes \mathbb{C} \rightarrow 0.$$
\par We now prove $H_k(G, V_j \otimes \mathbb{C})=0$  for any $j$ by induction. It is clear that $H_k(G, V_k \otimes \mathbb{C})=H_k(G, 0)=0.$
As $[\theta] \neq 0$, we see that $\rho$ is a nontrivial representation of $G$. By assumption, the induced action of $G$ on $V_j / V_{j+1}$ is trivial for any $j$.
Then the diagonal action of $G$ on $V_j /  V_{j+1} \otimes \mathbb{C}$ is nontrivial. As $G$ is a finitely generated nilpotent group,
by Theorem 2.2 in \cite{MP},  we get
$$H_k(G, V_j /  V_{j+1} \otimes \mathbb{C})=0.$$
By Lemma \ref{short} and induction, for any $j$, we get
$$H_k(G, V_j \otimes \mathbb{C})=0.$$
In particular,
$$H_k(G, H_l(\widetilde{M}^n,  \mathbb{C}))=H_k(G, V_0 \otimes \mathbb{C})=0.$$
By the Cartan-Leray spectral sequence \cite{Br}, we have
$$E^2_{kl}=H_k(G, H_l(\widetilde{M}^n,  \mathbb{C})) \Rightarrow H_{k+l}(M^n, \mathbb{C}_{\rho}).$$
Hence for any $k, l \geq 0$, we have
$$H_{k+l}(M^n, \mathbb{C}_{\rho})=0.$$
Then we get $H^p (M^n,\theta)=0$ for any $p$ and $[\theta] \neq 0.$

\section{An integral formula of $\Delta_{\theta}$ harmonic forms}
In section we derive an integral formula of $\Delta_{\theta}$ harmonic forms which will be crucial in the proof of Theorem \ref{ricci}.
\par Let $(M^n, g)$ be a closed Riemannian manifold and $\theta$ a closed real one form on $M^n$.
Define $d_{\theta}: \Omega^p(M^n) \rightarrow \Omega^{p+1}(M^n)$ as $d_{\theta} \alpha=d \alpha+ \theta \wedge \alpha$ for
$\alpha \in \Omega^p(M^n)$.
 Let $d^*$ be the formal $L^2$ adjoint of $d$ with respect to $g$. We can also define an operator $d_{\theta}^*$ as the formal $L^2$ adjoint of
$d_{\theta}$ with respect to $g$. Further, $\Delta_{\theta}=d_{\theta} d_{\theta}^*+ d_{\theta}^*d_{\theta}$ is the corresponding Laplacian.
These operators are lower-order perturbations of the corresponding operators in the usual Hodge-de Rham theory
and therefore have much the same analytic properties.
For example, the usual proof of the Hodge decomposition theorem goes through, and one obtains an orthogonal decomposition

$$\Omega^p(M^n)=\mathcal{H}^p(M^n) \oplus d_{\theta} (\Omega^{p-1}(M^n))\oplus d_{\theta}^* (\Omega^{p+1}(M^n)),$$
where $\mathcal{H}^p(M^n)$ is the space of $\Delta_{\theta}$ harmonic forms, which is isomorphic to $H^p(M^n, \theta)$.
\par Let $dV$ be the volume form of $g$ and $X$  the dual vector field of $\theta$
defined by $g(X, Y)=\theta (Y).$  Choose a $\Delta_{\theta}$ harmonic form $\alpha$ in $H^p(M^n, \theta)$. Then
$$d_{\theta}\alpha=d \alpha+  \theta \wedge \alpha=0$$
$$d_{\theta}^*\alpha=d^* \alpha+  i_X \alpha =0.$$

The following integral formula and its corollary \ref{key2} will be crucial in the proof of Theorem \ref{ricci}.

\begin{thm} \label{key}
$$\int_{M^n}  |X|^2 |\alpha|^2 dV=\frac{1}{2}\int_{M^n }\alpha \wedge[L_X, *]\alpha, $$
where $[L_X, *]\alpha=L_X*\alpha-*L_X\alpha$ and $L_X \alpha$ is the Lie derivative of $\alpha$ in the direction $X$.
\end{thm}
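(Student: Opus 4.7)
The plan is to combine Cartan's magic formula with the harmonic equations and then apply Stokes' theorem to the top-form $\alpha \wedge *\alpha$.

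First I would translate the harmonicity conditions: $d_{\theta}\alpha = 0$ gives $d\alpha = -\theta \wedge \alpha$, and $d_{\theta}^{*}\alpha = 0$ gives $d^{*}\alpha = -i_{X}\alpha$. Using Cartan's formula $L_{X} = d\,i_{X} + i_{X} d$ together with these identities yields
\begin{equation*}
L_{X}\alpha \;=\; d(i_{X}\alpha) + i_{X}(d\alpha) \;=\; -d\,d^{*}\alpha - i_{X}(\theta \wedge \alpha).
\end{equation*}
Since $i_{X}(\theta \wedge \alpha) = (i_{X}\theta)\alpha - \theta \wedge i_{X}\alpha = |X|^{2}\alpha + \theta \wedge d^{*}\alpha$, this gives a clean expression for $L_{X}\alpha$ in which $|X|^{2}\alpha$ appears explicitly; this is the source of the $|X|^{2}|\alpha|^{2}$ term we want.

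Next I would exploit the identity $L_{X}(\alpha \wedge *\alpha) = d\,i_{X}(\alpha \wedge *\alpha)$ (since $\alpha \wedge *\alpha$ is a top form and $L_{X}$ acts as a derivation). Integrating over the closed manifold $M^{n}$ and applying Stokes' theorem gives
\begin{equation*}
\int_{M^{n}} L_{X}\alpha \wedge *\alpha \;+\; \int_{M^{n}} \alpha \wedge L_{X}(*\alpha) \;=\; 0.
\end{equation*}
Using the symmetry $\alpha \wedge *L_{X}\alpha = L_{X}\alpha \wedge *\alpha$ (valid because both are $p$-forms), this rearranges to
\begin{equation*}
\int_{M^{n}} \alpha \wedge [L_{X},\,*]\alpha \;=\; -2 \int_{M^{n}} \langle L_{X}\alpha,\, \alpha \rangle\, dV.
\end{equation*}

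The remaining step is to substitute the formula for $L_{X}\alpha$ derived above and show that all terms except $-\int |X|^{2}|\alpha|^{2}\,dV$ cancel. Indeed, $\int \langle -dd^{*}\alpha, \alpha\rangle\, dV = -\int |d^{*}\alpha|^{2}\,dV = -\int |i_{X}\alpha|^{2}\,dV$, while the cross term $-\int \langle \theta \wedge d^{*}\alpha, \alpha\rangle\, dV$ is, by the adjoint relation between $\theta \wedge (\cdot)$ and $i_{X}$, equal to $-\int \langle d^{*}\alpha, i_{X}\alpha\rangle\, dV = +\int |i_{X}\alpha|^{2}\,dV$. These two terms kill each other, leaving $\int \langle L_{X}\alpha,\alpha\rangle\,dV = -\int |X|^{2}|\alpha|^{2}\,dV$, which combined with the previous display yields the desired formula.

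The main obstacle is really just sign/adjoint bookkeeping: one must be careful with the adjoint $(\theta \wedge \cdot)^{*} = i_{X}$, with Cartan's formula applied to possibly vector-valued objects, and with the symmetry of $\wedge *$ for equal-degree forms. There is no hard analytic input beyond Stokes' theorem; the entire argument is algebraic manipulation of the harmonicity conditions.
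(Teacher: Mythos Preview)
Your argument is correct and follows essentially the same route as the paper: both proofs reduce to the identity $\int_{M^n}\alpha\wedge[L_X,*]\alpha=-2\int_{M^n}\langle L_X\alpha,\alpha\rangle\,dV$ (obtained from $\int L_X(\alpha\wedge*\alpha)=0$ and the symmetry of $\wedge*$) together with the computation $\int\langle L_X\alpha,\alpha\rangle\,dV=-\int|X|^2|\alpha|^2\,dV$. The only organizational difference is that the paper carries out the latter computation by introducing $\beta=*\alpha$ and its dual equation $d\beta-\theta\wedge\beta=0$, whereas you compute it directly via the $L^2$-adjoint pairs $(d,d^*)$ and $(\theta\wedge\,\cdot\,,i_X)$; your version is slightly more economical but the content is the same.
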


\begin{rem}
When $\theta$ is exact and $X=\nabla f$ for some smooth function $f$ on $M^n$, we believe that the integral formula in Theorem \ref{key} is the same as
\cite{DX}. It is also possible to adapt the method in \cite{DX} to prove Theorem \ref{key}. However, we present a different proof here.
\end{rem}

\begin{cor} \label{key2}
$$\int_{M^n} |X|^2 |\alpha|^2 dV \leq C_n \int_{M^n}  (|\nabla X|+ |div (X)|)|\alpha|^2 dV $$
for some constant $C_n$ depending only on $n$.
\end{cor}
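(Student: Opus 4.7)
The plan is to deduce Corollary \ref{key2} from Theorem \ref{key} by a pointwise algebraic estimate on the commutator $[L_X,*]$. Specifically, I claim that
$$\bigl|[L_X,*]\alpha\bigr|(x) \;\leq\; C_n\bigl(|\nabla X|(x) + |\mathrm{div}(X)|(x)\bigr)\,|\alpha|(x)$$
pointwise on $M^n$, for some constant $C_n$ depending only on $n$. Once this is in hand, the pointwise pairing $|\beta \wedge \gamma| \leq |\beta|\,|\gamma|\,dV$ for $\beta\in\Omega^p(M^n)$, $\gamma\in\Omega^{n-p}(M^n)$ (which is immediate from $\beta \wedge \gamma = \langle\beta, *^{-1}\gamma\rangle\,dV$ together with the isometry property of $*$), applied with $\beta=\alpha$ and $\gamma=[L_X,*]\alpha$ and integrated against $dV$, converts the identity in Theorem \ref{key} directly into the claimed integral inequality.

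The pointwise bound is obtained by extracting the symbol of $[L_X,*]$ as follows. Since the Levi--Civita connection is torsion-free, a standard computation gives
$$L_X\alpha = \nabla_X\alpha + A(\nabla X)\alpha, \qquad (A(\nabla X)\alpha)(Y_1,\ldots,Y_p) = \sum_{i=1}^{p}\alpha(Y_1,\ldots,\nabla_{Y_i}X,\ldots,Y_p),$$
where $A(\nabla X)$ is a zeroth-order endomorphism of $\Omega^p(M^n)$ linear in the $(1,1)$-tensor $\nabla X$. Because the Hodge star is $\nabla$-parallel, $\nabla_X$ commutes with $*$, and therefore
$$[L_X,*]\alpha \;=\; A(\nabla X)(*\alpha) \;-\; *\bigl(A(\nabla X)\alpha\bigr).$$
Now decompose $\nabla X\in\mathrm{End}(TM)$ into its trace part $\tfrac{1}{n}\mathrm{div}(X)\cdot\mathrm{Id}$ and its traceless remainder. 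The trace part acts on $\Omega^p$ as multiplication by $p\cdot\tfrac{\mathrm{div}(X)}{n}$, so its contribution to $[L_X,*]\alpha$ is a multiple of $(n-2p)\,\mathrm{div}(X)\cdot(*\alpha)$; the traceless remainder gives a bilinear contraction of $\nabla X$ with $\alpha$ whose pointwise operator norm is bounded by a dimensional constant times $|\nabla X|$. Together these two pieces produce exactly the inequality claimed above.

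The only step requiring genuine care is this final symbol-level computation --- writing out both $A(\nabla X)(*\alpha)$ and $*(A(\nabla X)\alpha)$ in an orthonormal coframe at a point, performing the trace/traceless split, and certifying the dimensional constant $C_n$. But this is entirely a piece of linear algebra on $\Lambda^p T^*_xM$, with no analytic input beyond parallelism of $*$. Once it is completed, Corollary \ref{key2} follows from Theorem \ref{key} by a single application of Cauchy--Schwarz and integration, as sketched in the first paragraph.
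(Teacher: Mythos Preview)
Your proposal is correct and follows essentially the same route as the paper: obtain a pointwise bound $|[L_X,*]\alpha|\leq C_n(|\nabla X|+|\mathrm{div}(X)|)|\alpha|$ and plug it into Theorem~\ref{key}. The only difference is in how the pointwise bound is produced. The paper quotes Trautman's formula
\[
[L_X,*]\omega=\bigl(i(h)-\tfrac{1}{2}\mathrm{Tr}\,h\bigr)*\omega,\qquad h=L_Xg\circ g^{-1},
\]
and then observes $\mathrm{Tr}\,h=2\,\mathrm{div}(X)$ and $|i(h)|\leq C_n|\nabla X|$. You instead rederive this from scratch via $L_X=\nabla_X+A(\nabla X)$ and $[\nabla_X,*]=0$, which yields the same commutator as a zeroth-order operator in $\nabla X$. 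Your trace/traceless decomposition is in fact a bit more than you need---since $A(\nabla X)$ already has operator norm $\leq C_n|\nabla X|$ on each $\Omega^k$, the bound $|[A(\nabla X),*]\alpha|\leq 2C_n|\nabla X||\alpha|$ follows without isolating the divergence term (and $|\mathrm{div}(X)|\leq\sqrt{n}\,|\nabla X|$ anyway)---but this does no harm and matches the form of the stated inequality.
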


\begin{proof}

The Riemannian metric $g$ on $M^n$ induces a linear map between $TM^n$ and $T^*M^n$ defined by
$$g: TM^n \rightarrow T^*M^n$$
$$<g(X), Y>=g(X,Y), \forall X, Y \in TM^n.$$
Let $g^{-1}$ be the inverse of the above map $g$ and $h$ the endomorphism of the bundle $T^*M^n \rightarrow M^n$ by
$$h=L_X g \circ g^{-1}.$$
The derivation of the Grassmann algebra $\Lambda T^*M^n$ induced by $h$ is denoted by $i(h)$. This is a linear map such that, if $\gamma \in T^*M^n$, then
$i(h)(\gamma)=h(\gamma)$, and
\begin{equation} \label{11}
i(h)(\omega_1 \wedge \omega_2)=(i(h)\omega_1) \wedge \omega_2 + \omega_1 \wedge (i(h)\omega_2)
\end{equation}
for any $\omega_1, \omega_2 \in \Lambda T^*M^n.$ The following formula is proved in \cite{T}.
\begin{equation} \label{star}
[L_X, *]\omega=(i(h)-\frac{1}{2}Tr h) *\omega
\end{equation}
for any $\omega \in \Lambda T^*M^n$.
\par
Let $div (X)$ be the divergence of $X$ with respect to $g$.  As
$$(L_X g) (Y, Z)= \ g(\nabla_Y X, Z)+  \ g (Y, \nabla_Z X)$$ for all $Y, Z \in TM^n$, we see that $Tr h= 2 div (X).$
Then by Theorem \ref{key}, we get
$$\int_{M^n} |X|^2 |\alpha|^2 dV \leq C_n \int_{M^n}  (|\nabla X|+ |div X|)|\alpha|^2 dV $$
for some constant $C_n$ depending only on $n$.
\end{proof}

Now we prove Theorem \ref{key}.
We firstly need the following lemmas.

\begin{lem} \label{com}
For any $p$ form $\omega$, we have
\begin{equation} \label{0}
*i_X \omega=(-1)^{p-1} \theta \wedge *\omega,
\end{equation}
where $*$ is the Hodge star operator with respect to $g$.
\end{lem}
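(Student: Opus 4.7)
The plan is to reduce the claimed identity to the pointwise adjunction between exterior multiplication by $\theta$ and interior multiplication by its metric dual $X$, and then to appeal to the nondegeneracy of the wedge pairing on $\Lambda^* T^*_x M$. This avoids any explicit case analysis on basis forms with or without a distinguished direction, and keeps all signs in a single line of algebra.

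First I will record two standard pointwise facts, each verifiable in an orthonormal coframe. For $p$-forms $\mu,\nu$ one has $\mu\wedge *\nu=\langle\mu,\nu\rangle\,dV$; and for any $(p-1)$-form $\sigma$ and $p$-form $\eta$ the adjunction
\[
\langle\theta\wedge\sigma,\eta\rangle=\langle\sigma,\,i_X\eta\rangle
\]
holds, because in an orthonormal basis $\theta=\sum_j\theta_j e^j$ and $X=\sum_j\theta_j e_j$, while $e^j\wedge$ and $i_{e_j}$ are transposes of each other with respect to the induced inner product on $\Lambda^* T^*M$.

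With these in hand, the proof is one computation. For an arbitrary $(p-1)$-form $\sigma$,
\[
\sigma\wedge *(i_X\omega)=\langle\sigma,i_X\omega\rangle\,dV=\langle\theta\wedge\sigma,\omega\rangle\,dV=(\theta\wedge\sigma)\wedge *\omega=(-1)^{p-1}\sigma\wedge(\theta\wedge *\omega),
\]
where the last equality uses that $\theta$ is a $1$-form and $\sigma$ a $(p-1)$-form. Since the wedge pairing $\Lambda^{p-1}T^*_xM\otimes\Lambda^{n-p+1}T^*_xM\to\Lambda^nT^*_xM$ is nondegenerate at every $x$, the identity $\sigma\wedge *(i_X\omega)=\sigma\wedge\bigl((-1)^{p-1}\theta\wedge *\omega\bigr)$ for all $\sigma$ forces $*i_X\omega=(-1)^{p-1}\theta\wedge *\omega$ pointwise, hence globally.

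I do not anticipate any real obstacle. The only step with genuine content is the adjunction $\langle\theta\wedge\cdot,\cdot\rangle=\langle\cdot,i_X\cdot\rangle$, which is a one-line verification in an adapted orthonormal coframe with $e^1$ parallel to $\theta$; everything else is the nondegeneracy of the wedge pairing and bookkeeping of a single transposition sign. The lemma is in effect a bridge that converts the two harmonicity conditions $d\alpha+\theta\wedge\alpha=0$ and $d^*\alpha+i_X\alpha=0$ into a uniform wedge form, which is what will be needed in the subsequent proof of Theorem \ref{key}.
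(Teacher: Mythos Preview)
Your proof is correct and follows essentially the same route as the paper: both arguments pair the candidate identity against an arbitrary $(p-1)$-form, use the adjunction $\langle\theta\wedge\sigma,\omega\rangle=\langle\sigma,i_X\omega\rangle$ together with $\mu\wedge *\nu=\langle\mu,\nu\rangle\,dV$, and then invoke nondegeneracy. The only cosmetic difference is that the paper phrases the pairing via integration over $M^n$ rather than pointwise; since the identity is in fact pointwise, your formulation is arguably the more natural one.
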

\begin{proof}
For any $p-1$ form $\xi$, we have
$$\int_{M^n} \xi \wedge *i_{X} \omega=\int_{M^n} g (\xi, i_X \omega) dV$$
$$=\int_{M^n} g(\theta \wedge \xi, \omega) dV=\int_{M^n}  \theta \wedge \xi \wedge *\omega$$
$$=(-1)^{p-1} \int_{M^n}  \xi \wedge  \theta \wedge *\omega.$$
Hence
$$*i_X \omega=(-1)^{p-1} \theta \wedge *\omega.$$
\end{proof}

\begin{lem}
Let $\beta=*\alpha$, then
$$d \beta -  \theta \wedge \beta=0.$$
\end{lem}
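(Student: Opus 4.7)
The plan is to deduce $d\beta - \theta\wedge\beta = 0$ directly from the coclosed condition $d_\theta^*\alpha = d^*\alpha + i_X\alpha = 0$ by commuting everything through the Hodge star. Conceptually the identity being proved says that $\beta=*\alpha$ is $d_{-\theta}$-closed, which is the Hodge dual of $\alpha$ being $d_\theta^*$-closed. So I expect the whole proof to be a one-line Hodge duality, once Lemma \ref{com} is used to commute $i_X$ past $*$ in terms of $\theta\wedge$.

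First I would invoke the standard Hodge-de Rham formula $d^*\omega = (-1)^{n(p+1)+1}\,{*}d{*}\omega$ for a $p$-form $\omega$. Applying it to $\alpha$ and using $*\alpha = \beta$, the relation $d^*\alpha + i_X\alpha = 0$ becomes
$$(-1)^{n(p+1)+1}\,{*}d\beta \;=\; -\,i_X\alpha.$$
Next I apply $*$ to both sides. On the left, $**$ acts on the $(n-p+1)$-form $d\beta$ by multiplication by $(-1)^{(n-p+1)(p-1)}$. On the right, Lemma \ref{com} yields $*i_X\alpha = (-1)^{p-1}\,\theta\wedge\beta$. Combining these identities gives an equation of the form $d\beta = \pm\,\theta\wedge\beta$.

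The only genuine task is to verify that the overall sign comes out to $+1$. Reducing modulo $2$, the exponents $n(p+1)+1$, $(n-p+1)(p-1)$, and $p-1$, together with the explicit minus from $d^*\alpha = -i_X\alpha$, sum to an even integer (using $p^2 \equiv p \pmod 2$), so in fact $d\beta = \theta\wedge\beta$. This is the only point that could plausibly cause trouble; apart from the sign bookkeeping, the proof is purely formal and requires nothing beyond Lemma \ref{com} and the standard expression of $d^*$ via the Hodge star.
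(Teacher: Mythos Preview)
Your proof is correct and follows essentially the same route as the paper's: both start from $d^*\alpha=(-1)^{n(p+1)+1}{*}d{*}\alpha$, apply $*$ once more, invoke Lemma~\ref{com} to convert $*i_X\alpha$ into $(-1)^{p-1}\theta\wedge\beta$, and then check that the accumulated sign is $+1$. The only difference is cosmetic---the paper keeps all terms on one side throughout, whereas you isolate $*d\beta$ first---but the computation and the key input are identical.
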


\begin{proof}
As $d^*\alpha=(-1)^{n(p+1)+1}*d*\alpha$ and $d^*\alpha +i_X \alpha=0$, we get
$$(-1)^{n(p+1)+1}*d*\alpha+ i_X \alpha =0.$$
Hence
$$(-1)^{n(p+1)+1}**d*\alpha+ *i_X \alpha =0.$$
By Lemma \ref{com}, we have
$$*i_X \alpha=(-1)^{p-1} \theta \wedge *\alpha.$$
It follows that
$$(-1)^p d*\alpha + (-1)^{p-1}  \theta \wedge * \alpha=0$$
So
$$d \beta - \theta \wedge \beta=0.$$
\end{proof}

Now we proceed to prove Theorem \ref{key}.
As $d \alpha+  \theta \wedge \alpha=0$, we get
$$i_X d\alpha + i_X ( \theta \wedge \alpha)=0.$$
So
\begin{equation}\label{1}
i_X d\alpha \wedge \beta +  |X|^2 \alpha \wedge \beta -  \theta \wedge i_X \alpha  \wedge \beta=0.
\end{equation}
On the other hand, as $d \beta -  \theta \wedge \beta=0$, we get
$$i_X d\beta - i_X (\theta \wedge \beta)=0.$$
So
$$i_X d\beta \wedge \alpha-  |X|^2 \beta \wedge \alpha +  \theta \wedge i_X \beta \wedge \alpha=0.$$
Then
\begin{equation}\label{2}
 \alpha \wedge i_X d\beta -  |X|^2 \alpha \wedge \beta  + (-1)^p  \theta \wedge \alpha \wedge i_X \beta=0.
\end{equation}
By \ref{1}, \ref{2}, we get
\begin{equation} \label{3}
-i_X d\alpha \wedge \beta +  \alpha \wedge i_X d\beta -2  |X|^2 \alpha \wedge \beta + \theta \wedge i_X \alpha  \wedge \beta
 + (-1)^p  \theta \wedge \alpha \wedge i_X \beta=0.
\end{equation}
Combined with
$$\theta \wedge i_X \alpha \wedge \beta + (-1)^p \theta \wedge \alpha \wedge i_X \beta= \theta \wedge i_X (\alpha \wedge \beta)$$
$$=|X|^2\alpha \wedge \beta - i_X (\theta \wedge \alpha \wedge \beta)=|X|^2\alpha \wedge \beta,$$
we get
\begin{equation} \label{44}
-i_X d\alpha \wedge \beta +  \alpha \wedge i_X d\beta= |X|^2\alpha \wedge \beta.
\end{equation}
Since
$$d(i_X \alpha \wedge \beta)=d i_X \alpha \wedge \beta+ (-1)^{p-1} i_X \alpha \wedge d \beta,$$
we get
\begin{equation} \label{4}
\int_{M^n} i_X \alpha \wedge d\beta =(-1)^p \int_{M^n} d i_X \alpha \wedge \beta.
\end{equation}
On the other hand, we have
\begin{equation} \label {5}
0= i_X (\alpha \wedge d \beta)=i_X \alpha \wedge d \beta+ (-1)^p \alpha \wedge i_X d \beta.
\end{equation}
Combing \ref{4}, \ref{5}, we get
\begin{equation} \label{6}
\int_{M^n} \alpha \wedge i_X d\beta=-\int_{M^n} d i_X \alpha \wedge \beta.
\end{equation}
From \ref{44}, \ref{6}, we get
$$\int_{M^n} |X|^2\alpha \wedge \beta= -\int_{M^n} i_X d\alpha \wedge \beta -\int_{M^n}  d i_X \alpha \wedge \beta=
-\int_{M^n} L_X \alpha \wedge \beta$$
\begin{equation} \label{7}
=-\int_{M^n} L_X (\alpha \wedge \beta)+\int_{M^n} \alpha \wedge L_X \beta
=\int_{M^n} \alpha \wedge L_X \beta.
\end{equation}
As $\beta =* \alpha$, we get
\begin{equation}\label{8}
\int_{M^n} \alpha \wedge L_X \beta=\int_{M^n} \alpha \wedge L_X *\alpha
=\int_{M^n} \alpha \wedge * L_X \alpha+ \int_{M^n} \alpha \wedge [L_X, *]\alpha.
\end{equation}
Moreover,
$$\int_{M^n} \alpha \wedge * L_X \alpha=\int_{M^n} L_X \alpha \wedge *\alpha $$
$$=\int_{M^n} L_X (\alpha \wedge *\alpha)- \int_{M^n} \alpha \wedge  L_X *\alpha=-\int_{M^n}  \alpha \wedge  L_X *\alpha$$
$$=-\int_{M^n} \alpha \wedge  * L_X \alpha -\int_{M^n}  \alpha \wedge  [L_X, *]\alpha.$$
Hence
\begin{equation} \label{9}
\int_{M^n} \alpha \wedge * L_X \alpha=-\frac{1}{2} \int_{M^n} \alpha \wedge  [L_X, *]\alpha.
\end{equation}
By \ref{7}, \ref{8}, \ref{9}, we get
$$\int_{M^n}  |X|^2 |\alpha|^2 dV=\frac{1}{2}\int_{M^n }\alpha \wedge[L_X, *]\alpha.$$

\section{Proof of Theorem \ref{ricci}}
In this section we give a proof of Theorem \ref{ricci}.  The proof is based on Corollary \ref{key2}.
Another crucial tool is the following Poincar$\acute{e}$-Sobolev inequality (\cite{B}, page 397).

\begin{thm} \label{sob}
Let $(M^n,g)$ be a closed smooth Riemannian manifold such that for some constant $b>0$,
$$ r_{min}(g) D^2(g)\geq-(n-1)b^2,$$
where $D(g)$ is the diameter of $g$, $Ric(g)$ is the Ricci curvature of $g$ and
$$r_{min}(g)=inf\{{Ric(g)(u,u): u \in TM, g(u,u)=1}\}.$$
Let $R=\frac{D(g)}{b C(b)}$, where $C(b)$ is the unique positive root of the equation
$$ x \int_0^{b}(cht + x sht)^{n-1} dt=\int_0^{\pi} sin^{n-1}t dt.$$
Then for each $1 \leq p \leq \frac{nq}{n-q}, p < \infty$ and $f \in W^{1,q}(M^n)$, we have
$$ \|f-\frac{1}{V(g)}\int_{M^n}f dV \|_p \leq S_{p,q} \|df\|_q $$
$$ \|f\|_p \leq S_{p,q} \|df\|_q + V(g)^{1/p-1/q}\|f\|_q,$$
where $V(g)$ is the volume of $(M^n,g)$, $S(p,q)=(V(g)/vol(S^n(1))^{1/p-1/q} R \Sigma(n,p,q)$ and $\Sigma(n,p,q)$ is the Sobolev constant of
the canonical unit sphere $S^n$ defined by
$$\Sigma(n,p,q)=sup\{{\|f\|_p/\|df\|_q: f \in W^{1,q}(S^n), f \neq 0, \int_{S^n}f=0}\}.$$

\end{thm}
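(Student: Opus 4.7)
The plan is to follow the classical strategy of Gallot and Bérard--Besson--Gallot, deriving the sharp Sobolev--Poincaré inequality from the Ricci-diameter hypothesis via an isoperimetric comparison to the canonical sphere, and then converting the isoperimetric bound into a Sobolev bound through the co-area formula.

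First I would normalize. Rescaling $g$ to $\tilde g = (b/D(g))^2 g$ makes $D(\tilde g)=b$, and the hypothesis $r_{min}(g)D(g)^2 \geq -(n-1)b^2$ becomes $r_{min}(\tilde g)\geq -(n-1)$. This places us in the standard comparison setting where hyperbolic space of constant curvature $-1$ serves as the model. By Bishop--Gromov, the Jacobian of geodesic polar coordinates on $M^n$ is then dominated on $[0,b]$ by the hyperbolic density $(\sinh t)^{n-1}$. The implicit function $C(b)$ appearing in the statement is designed so that the modified hyperbolic density $(\cosh t + C(b)\sinh t)^{n-1}$, integrated over $[0,b]$, matches the spherical density $\sin^{n-1} t$ integrated over $[0,\pi]$. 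This is precisely the normalization needed to transplant measurable subsets of $M^n$ to spherical caps of equal relative volume in $S^n$.

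Second I would establish the isoperimetric inequality. Using the volume comparison together with a Gromov--Lévy-type rearrangement of a tubular neighborhood of $\partial \Omega$ into an annular neighborhood of a spherical cap in $S^n$, one obtains for every smooth domain $\Omega \subset M^n$
$$\frac{\mathrm{vol}_{n-1}(\partial \Omega)}{V(g)^{(n-1)/n}} \;\geq\; \frac{1}{R\,\mathrm{vol}(S^n)^{1/n}}\; I_{S^n}\!\left(\frac{\mathrm{vol}(\Omega)}{V(g)}\right),$$
where $I_{S^n}$ is the spherical isoperimetric profile and, after unscaling, $R=D(g)/(bC(b))$. Third, I would pass from this isoperimetric inequality to the Sobolev--Poincaré inequality. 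At the endpoint $q=1$, $p=n/(n-1)$, the co-area formula applied to $|f-\bar f|$ and integration of the isoperimetric inequality over all level sets yields the sharp mean-zero Sobolev inequality with the sphere's sharp constant $\Sigma(n,p,1)$ emerging naturally from $I_{S^n}$, and the dimensional factor $(V(g)/\mathrm{vol}(S^n))^{1/p-1/q}$ coming from transporting the normalized spherical estimate back to $M^n$. The entire Sobolev range $1\leq p\leq nq/(n-q)$ is then reached by substituting $|f|^s$ for an appropriate $s>1$ into the endpoint inequality and applying Hölder.

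Finally, the second inequality follows from the first by the triangle inequality
$$\|f\|_p \leq \|f-\bar f\|_p + V(g)^{1/p}|\bar f|,$$
combined with $|\bar f|\leq V(g)^{-1/q}\|f\|_q$ from Hölder's inequality, which produces the additive term $V(g)^{1/p-1/q}\|f\|_q$. The main obstacle is the isoperimetric step: Bishop--Gromov supplies only a radial volume estimate, and converting it into a sharp boundary inequality with the explicit constant $R$ requires the full measure-preserving symmetrization from $M^n$ to $S^n$, with $C(b)$ verified to be exactly the normalizer that makes the transplantation measure-preserving. Carefully tracking the sharp constants through this symmetrization — rather than any analytic subtlety in the Sobolev extension or interpolation — is the technical heart of the argument.
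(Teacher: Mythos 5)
The paper offers no proof of this theorem: it is imported verbatim from B\'erard's survey \cite{B} (page 397), where it is due to Gallot and to B\'erard--Besson--Gallot, so there is no internal argument to compare against. Your sketch reconstructs the proof from that source and follows its strategy (scale-invariant normalization, isoperimetric comparison with the round sphere, co-area formula, and the triangle-inequality-plus-H\"older derivation of the second estimate from the first, which is correct as you state it). Two points deserve correction. First, the density $(\cosh t + x\sinh t)^{n-1}$ does not arise from Bishop--Gromov volume comparison in geodesic polar coordinates about a point --- that comparison yields only $\sinh^{n-1}t$ and contains no free parameter $x$. It is the Heintze--Karcher comparison for the normal exponential map issuing from the hypersurface $\partial\Omega$, with $x$ playing the role of a mean-curvature bound; $C(b)$ is then the unique value for which the total comparison volume over $[0,b]$ (after normalization every point of $M^n$ lies within distance $b$ of $\partial\Omega$) matches that of the model sphere. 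Second, obtaining the full range $1\leq p\leq nq/(n-q)$ with precisely the spherical constant $\Sigma(n,p,q)$ is not done by substituting $|f|^s$ into the $q=1$ endpoint and applying H\"older --- that route produces a constant strictly worse than the sphere's own. The cited proof instead performs a decreasing rearrangement of $f$ onto a radially symmetric function on $S^n$ with the same distribution; the isoperimetric inequality shows this rearrangement decreases $\|df\|_q$ up to the factor $R$ and the volume normalization, after which the spherical Sobolev inequality is applied directly for each pair $(p,q)$. Since you explicitly defer the symmetrization step as the technical heart, your proposal is an accurate roadmap of the cited argument rather than a self-contained proof, which is consistent with how the paper itself uses the theorem.
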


 Let $p=\frac{2n}{n-2}, q=2$ in Theorem \ref{sob} and apply Theorem 3 and Proposition 6 in \cite{B} pages 395-396, then we get the following mean value inequality.

\begin{thm}\label{sbl}
Let $ n\geq 3$ and $(M^n,g)$ be a closed $n$-dimensional smooth Riemannian manifold such that for some constant $b>0$,
$$r_{min}(g) D^2(g)\geq-(n-1)b^2.$$
If $f\in W^{1,2}(M^n)$ is a nonnegative continuous function such that $f \Delta f \geq -c f^2$ (here $\Delta$ is a negative operator)
in the sense of districution for some positive number $c$, then
$$max_{x \in M^n}|f|^2(x) \leq B_n (\sigma_n R c^{1/2}) \frac{\int_{M^n}f^2 dV}{V(g)},$$
where $\sigma_n=vol(S^n)^{1/n} \Sigma(n,\frac{2n}{n-2},2)$ and $B_n: \mathbb{R}_+ \rightarrow \mathbb{R}_+$ is a function defined by
$$B_n(x)=\prod_{i=0}^{\infty}(x \nu^i (2\nu^i-1)^{-1/2}+1)^{2\nu^{-i}}, \nu=\frac{n}{n-2}.$$
The function $B_n$ satisfies the inequalities
$$ B_n(x)\leq exp(2x\sqrt{\nu}/(\sqrt{\nu}-1)),  0 \leq x \leq 1$$
$$ B_n(x) \leq B_n(1)x^{2\nu/(\nu-1)},  x\geq 1.$$
In particular, $\lim_{x \rightarrow 0_{+}B_n(x)}=1$ and $B_n(x) \leq B_n(1) x^n$ for $x \geq 1$.
 \end{thm}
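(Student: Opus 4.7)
The plan is to execute a De Giorgi--Moser iteration in which Theorem \ref{sob} (specialized to $q=2$, $p = 2\nu = 2n/(n-2)$) plays the role of the Euclidean Sobolev inequality. With this choice one has $V(g)^{1/p-1/q} = V(g)^{-1/n}$ and $S_{p,q} = V(g)^{-1/n}\,\sigma_n R$, so the Poincar\'e--Sobolev inequality reduces to
$$\|u\|_{2\nu} \;\leq\; V(g)^{-1/n}\bigl(\sigma_n R\,\|du\|_2 + \|u\|_2\bigr), \qquad u \in W^{1,2}(M^n).$$

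First I would derive a Caccioppoli-type estimate for the positive powers $f^k$, $k \geq 1$. Testing the distributional inequality $f\Delta f \geq -cf^2$ against $f^{2k-2}$ (permissible because $f \geq 0$ is continuous) and integrating by parts, using that $\Delta$ is the negative Laplacian so that $\int g\Delta f\,dV = -\int \nabla g \cdot \nabla f\,dV$, yields
$$\int_{M^n} |\nabla f^k|^2\, dV \;\leq\; \frac{k^2}{2k-1}\,c\int_{M^n} f^{2k}\, dV.$$
Inserting $u = f^k$ into the specialized Poincar\'e--Sobolev inequality and using $\|f^k\|_{2\nu} = \|f\|_{2k\nu}^k$, $\|f^k\|_2 = \|f\|_{2k}^k$ then produces the single-step iteration
$$\|f\|_{2k\nu} \;\leq\; V(g)^{-1/(nk)}\left(\frac{\sigma_n R\, k\, c^{1/2}}{\sqrt{2k-1}} + 1\right)^{1/k}\|f\|_{2k}.$$

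Next I would iterate with $k_i = \nu^i$, $i = 0, 1, 2, \ldots$, and pass to the limit $i \to \infty$, using continuity of $f$ so that $\|f\|_{2\nu^i} \to \|f\|_\infty$. The volume exponents telescope via $\sum_{i\geq 0}\nu^{-i} = \nu/(\nu-1) = n/2$, collapsing the accumulated $V(g)$-factor to $V(g)^{-1/2}$, and squaring the resulting inequality yields the claimed bound with the product $\prod_{i\geq 0}(\sigma_n R c^{1/2}\,\nu^i(2\nu^i-1)^{-1/2} + 1)^{2\nu^{-i}} = B_n(\sigma_n R c^{1/2})$. The two asymptotic estimates for $B_n$ then follow from elementary bounds on $\log B_n(x) = \sum_{i\geq 0} 2\nu^{-i}\log\bigl(1 + x\nu^i(2\nu^i-1)^{-1/2}\bigr)$: for $x \leq 1$ apply $\log(1+y) \leq y$ termwise and note that $(2\nu^i-1)^{-1/2} \leq \nu^{-i/2}$ reduces the sum to a convergent geometric series with ratio $\nu^{-1/2}$, giving the factor $\sqrt{\nu}/(\sqrt{\nu}-1)$; for $x \geq 1$ one has $x\nu^i(2\nu^i-1)^{-1/2} \geq 1$, so factoring this quantity out of each term, using $\sum 2\nu^{-i} = n$ and $2\nu/(\nu-1) = n$, produces the advertised $x^n$ growth.

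The main obstacle is the algebraic bookkeeping: matching the constants that accumulate through the iteration with exactly the product defining $B_n$, and recognizing the dimensional constant $\sigma_n = vol(S^n)^{1/n}\,\Sigma(n,2\nu,2)$ as the precise combination of the volume prefactor and the sphere's Sobolev constant coming from Theorem \ref{sob}. No new idea beyond classical Moser iteration is required, and this is precisely the content of Theorem 3 and Proposition 6 in \cite{B}, which we follow.
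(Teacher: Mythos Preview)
Your proposal is correct and follows precisely the Moser iteration argument of B\'erard \cite{B} (Theorem~3 and Proposition~6) that the paper itself cites in lieu of a proof; the constants you track---the volume exponent collapsing to $V(g)^{-1/2}$ via $\sum_{i\ge 0}\nu^{-i}=n/2$, and the product yielding $B_n(\sigma_n R c^{1/2})$---match exactly. Since the paper gives no independent argument beyond this citation, your approach coincides with the intended one.
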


 Let $M^n$ be a closed Riemannian manifold with nonzero first de Rham
cohomology group and admits a sequence of Riemannian metrics $g_i$ such that
$$Ric (g_i) \geq -\frac{n-1}{i}$$
$$D(g_i) \leq 1.$$
Moreover, the curvature operator of $g_i$ is uniformly bounded from below by $-Id$. For any $[\theta] \in H^1_{dR}(M^n), [\theta] \neq 0$,
we are going to prove that there exists some $t \in \mathbb{R}, t \neq 0$ such that $H^p (M^n, t \theta)=0$ for any $p$.
If $n=2$, since the first Betti number of $M^2$ is bounded by $2$ (see e.g. \cite{B}), the genus of $M^2$ is at most $1$ and $H^p (M^2, t \theta)=0$ by Example \ref{ex}.
Now we assume that $n \geq 3.$
 Let $d^*$ be the formal $L^2$ adjoint of $d$ with respect to $g_i$.
 By Hodge theory, we can choose a harmonic one form $\theta_i$ in the cohomology class $[\theta]$. Then
 $$d \theta_i=0$$
 $$ d^*\theta_i=0$$
 $$\theta_i \neq 0.$$

 Let $t_i=(\frac{V(g_i)}{\int_{M^n} |X_i|^2 dV_i})^{1/2}>0$, where $V(g_i)$ is the volume of $(M^n, g_i)$, $dV_i$ is the volume form of $g_i$,
 $|X_i|^2=g_i(X_i,X_i)$ and $X_i$ is the dual vector field of $\theta_i$
defined by $g_i (X_i, Y)=\theta (Y).$
We claim that for sufficiently large $i$, $H^p(M^n, t_i \theta_i)=0$ for any $p$. Choose a $\Delta_{t_i\theta_i}$ harmonic form $\alpha_i$ in $H^p(M^n, t_i \theta_i)$. Then
$$d \alpha_i+ t_i \theta_i \wedge \alpha_i=0$$
$$d^* \alpha_i+  i_{t_i X_i} \alpha_i =0.$$

The goal is to prove that $\alpha_i \equiv 0.$  As $Ric(g_i) \geq  - \frac{n-1}{i}$, applying Bochner formula to $X_i$ \cite{P}, we get
\begin{equation} \label{413}
\frac{1}{2}\Delta |X_i|^2=|\nabla X_i|^2 + Ric(g_i)(X_i,X_i)\geq |\nabla X_i|^2 - \frac{n-1}{i}|X_i|^2,
\end{equation}
where $\Delta$ is the Laplacian acting on functions which is a negative operator.
Then
\begin{equation} \label{20}
\int_{M^n} |\nabla X_i|^2 dV_i \leq \frac{n-1}{i} \int_{M^n} |X_i|^2 dV_i.
\end{equation}

Let $div (X_i)$ be the divergence of $X_i$ with respect to $g_i$. As $\theta_i$ is a harmonic one form, we see $div (X_i)=0$ (see e.g. Proposition 31 in \cite{P} page 206).
By  Corollary \ref{key2}, we have
\begin{equation} \label{small}
\int_{M^n} t_i^2 |X_i|^2 |\alpha_i|^2 dV_i \leq C_n \int_{M^n} t_i |\nabla X_i||\alpha_i|^2 dV_i.
\end{equation}
for some constant $C_n$ depending only on $n$. Applying H$\ddot{o}$lder's inequality on \ref{small} and using \ref{20}, we get
$$\int_{M^n} t_i^2 |X_i|^2 |\alpha_i|^2 dV_i \leq C_n \int_{M^n}  t_i|\nabla X_i||\alpha_i|^2 dV_i $$
$$\leq C_n (\int_{M^n} t_i^2 |\nabla X_i|^2 dV_i )^{\frac{1}{2}} (\int_{M^n}|\alpha_i|^4 dV_i)^{\frac{1}{2}} $$
\begin{equation} \label{21}
\leq \frac{C_n}{\sqrt{i}} |\alpha_i|_{\infty} (\int_{M^n} t_i^2 |X_i|^2 dV_i )^{\frac{1}{2}} (\int_{M^n}|\alpha_i|^2 dV_i)^{\frac{1}{2}},
\end{equation}
where $|\alpha_i|_{\infty}=£ºmax_{x \in M^n}|\alpha_i|(x).$

\begin{lem} \label{es}
\begin{equation} \label{211}
|X_i|^2_{\infty}=:max_{x \in M^n}|X_i|^2(x) \leq B_n( \sigma_n R_i  \sqrt{\frac{n-1}{i}}) \frac{\int_{M^n} |X_i|^2 dV_i}{V(g_i)},
\end{equation}
\begin{equation} \label{212}
|\alpha_i|^2_{\infty}=:max_{x \in M^n}|\alpha_i|^2(x) \leq B_n( \sigma_n R_i (t_i^2 |X_i|^2_{\infty}+ C_n) ^{\frac{1}{2}}) \frac{\int_{M^n} |\alpha_i|^2 dV_i}{V(g_i)},
\end{equation}
where $R_i =\frac{D(g_i)}{\frac{1}{\sqrt{i}} C(\frac{1}{\sqrt{i}})}$, $C(\frac{1}{\sqrt{i}}), \sigma_n, B_n(x)$ are defined in Theorem \ref{sob} and Theorem \ref{sbl}
and $C_n$ is a positive constant depending only on $n$.
\end{lem}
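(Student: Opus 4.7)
The plan is to apply the mean value inequality (Theorem \ref{sbl}) to $f=|X_i|$ for \eqref{211} and to $f=|\alpha_i|$ for \eqref{212}. In each case the input is a differential inequality of the form $f\Delta f \geq -c f^2$, which is obtained from the appropriate Bochner--Weitzenb\"ock identity combined with Kato's inequality.

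For \eqref{211}, since $\theta_i$ is a harmonic $1$-form, the classical Bochner formula for $X_i$ gives
\[
\tfrac{1}{2}\Delta|X_i|^2 \;=\; |\nabla X_i|^2 + \operatorname{Ric}(g_i)(X_i,X_i).
\]
Using the identity $\tfrac{1}{2}\Delta|X_i|^2 = |X_i|\,\Delta|X_i| + |\nabla|X_i||^2$ together with Kato's inequality $|\nabla|X_i||^2 \leq |\nabla X_i|^2$ and the hypothesis $\operatorname{Ric}(g_i) \geq -(n-1)/i$, I obtain
\[
|X_i|\,\Delta|X_i| \;\geq\; -\tfrac{n-1}{i}\,|X_i|^2.
\]
Theorem \ref{sbl} then applies with $b=1/\sqrt{i}$ and $c=(n-1)/i$, yielding \eqref{211} after recognizing $R_i = D(g_i)/(b C(b))$ with $b=1/\sqrt{i}$.

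For \eqref{212}, I would use the Weitzenb\"ock identity $\Delta_H = \nabla^{*}\nabla + \mathcal{R}_p$ on $p$-forms, where $\mathcal{R}_p$ is the curvature endomorphism; the standard representation-theoretic estimate shows that a lower bound $-\operatorname{Id}$ on the curvature operator forces $\mathcal{R}_p \geq -C_n\operatorname{Id}$ for a constant depending only on $n$. Since $\alpha_i$ is $\Delta_{t_i\theta_i}$-harmonic, $d\alpha_i = -t_i\theta_i\wedge\alpha_i$ and $d^{*}\alpha_i = -i_{t_iX_i}\alpha_i$, so writing $\Delta_H\alpha_i = (\Delta_H-\Delta_{t_i\theta_i})\alpha_i$ and expanding the difference
\[
\Delta_H - \Delta_{t_i\theta_i} \;=\; -\bigl(d\circ i_{t_iX_i} + i_{t_iX_i}\circ d + t_i\theta_i\wedge d^{*} + d^{*}(t_i\theta_i\wedge\cdot) + t_i^2|X_i|^2\bigr),
\]
one sees that pairing with $\alpha_i$ produces a pointwise algebraic contribution of size $t_i^2|X_i|^2|\alpha_i|^2$ and first-order contributions of the form $t_i|X_i||\alpha_i||\nabla\alpha_i|$ coming from the $L_{X_i}$-type pieces. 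Combining this with the Weitzenb\"ock formula, Kato's inequality $|\nabla|\alpha_i||^2 \leq |\nabla\alpha_i|^2$, and absorbing the cross term $t_i|X_i||\alpha_i||\nabla\alpha_i|$ into the gradient square via Cauchy--Schwarz (which replaces $t_i|X_i|$ by $t_i^2|X_i|^2$ plus a constant), I obtain
\[
|\alpha_i|\,\Delta|\alpha_i| \;\geq\; -\bigl(t_i^2|X_i|^2_\infty + C_n\bigr)|\alpha_i|^2.
\]
Theorem \ref{sbl} with $c = t_i^2|X_i|^2_\infty + C_n$ then yields \eqref{212}.

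The routine part is \eqref{211}; the genuine work is \eqref{212}, where the main obstacle is the bookkeeping for $\Delta_H - \Delta_{t_i\theta_i}$. One must confirm that every first-order piece in the commutator can be absorbed into the $|\nabla\alpha_i|^2$ coming from $\nabla^{*}\nabla$, with the residual coefficient controlled by $t_i^2|X_i|^2_\infty$; and that the purely algebraic curvature contribution is bounded below by $-C_n$ uniformly, which uses precisely the uniform lower bound on the curvature operator assumed in Theorem \ref{ricci}. The appearance of $|X_i|^2_\infty$ (rather than $|X_i|^2$ pointwise) reflects that Theorem \ref{sbl} requires a \emph{constant} $c$, forcing the substitution $|X_i|^2 \leq |X_i|^2_\infty$ before the mean value inequality is invoked.
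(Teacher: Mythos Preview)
For \eqref{211} your argument is identical to the paper's.

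For \eqref{212} your route differs from the paper's, and as written it has a gap. The paper does \emph{not} expand $\Delta_H-\Delta_{t_i\theta_i}$ and absorb cross terms. Instead it inserts into the Bochner formula a clean pointwise algebraic identity (Lemma~\ref{os}):
\[
|d\alpha_i|^2+|d^*\alpha_i|^2 \;=\; t_i^2\,|X_i|^2\,|\alpha_i|^2,
\]
which follows immediately from $d\alpha_i=-t_i\theta_i\wedge\alpha_i$, $d^*\alpha_i=-t_i\,i_{X_i}\alpha_i$ and the elementary fact $|\theta\wedge\alpha|^2+|i_X\alpha|^2=|X|^2|\alpha|^2$ for any $1$-form $\theta$ with dual $X$. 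This gives directly
\[
\tfrac12\Delta|\alpha_i|^2 \;\ge\; |\nabla\alpha_i|^2 - t_i^2|X_i|^2|\alpha_i|^2 - C_n|\alpha_i|^2,
\]
with no first-order cross terms and, crucially, no appearance of $\nabla X_i$.

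In your expansion, the Lie-derivative pieces contribute more than the cross terms $t_i|X_i|\,|\alpha_i|\,|\nabla\alpha_i|$ you list. Writing $L_X\alpha=\nabla_X\alpha+A(\nabla X)\alpha$ with $A$ algebraic in $\nabla X$ (and similarly for the adjoint piece $d^*(\theta\wedge\,\cdot\,)+\theta\wedge d^*$), pairing with $\alpha_i$ also produces terms of size $t_i|\nabla X_i|\,|\alpha_i|^2$. These cannot be absorbed into $|\nabla\alpha_i|^2$ by Cauchy--Schwarz, and since Theorem~\ref{sbl} needs a \emph{constant} $c$ you would be forced to take $c\ge t_i|\nabla X_i|_\infty$. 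The hypotheses give only the $L^2$ bound $\int|\nabla X_i|^2\le \frac{n-1}{i}\int|X_i|^2$; there is no $L^\infty$ control on $\nabla X_i$ (a Moser iteration for $|\nabla X_i|$ would require bounds on curvature derivatives, which are not assumed). So the absorption scheme does not close. The paper's Lemma~\ref{os} is exactly the device that removes $\nabla X_i$ from the picture and makes the differential inequality for $|\alpha_i|$ depend only on $|X_i|$.
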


\begin{proof}
Since $\theta_i$ is a harmonic one form, $div X_i=0$.
As $Ric(g_i) \geq  - \frac{n-1}{i}$, applying Bochner formula to $X_i$, we get
\begin{equation} \label{413}
\frac{1}{2}\Delta |X_i|^2=|\nabla X_i|^2 + Ric(g_i)(X_i,X_i)\geq |\nabla X_i|^2 - \frac{n-1}{i}|X_i|^2,
\end{equation}
where $\Delta$ is the Laplacian acting on functions which is a negative operator.
On the other hand, by Kato's inequality \cite{B}, we have $|\nabla X_i|\geq |\nabla|X_i||$. It follows that
\begin{equation} \label{414}
|X_i| \Delta |X_i| \geq  -\frac{n-1}{i}|X_i|^2.
\end{equation}
Since $Ric(g_i) \geq  - \frac{n-1}{i}$, $D(g_i) \leq 1$, we have
$$r_{min}(g_i) D^2(g_i) \geq -\frac{n-1}{i}.$$
Apply Theorem \ref{sbl} to $|X_i|$, we get
\begin{equation} \label{17}
|X_i|^2_{\infty}=:max_{x \in M^n}|X_i|^2(x) \leq B_n( \sigma_n R_i  \sqrt{\frac{n-1}{i}}) \frac{\int_{M^n} |X_i|^2 dV_i}{V(g_i)},
\end{equation}
where $R_i =\frac{D(g_i)}{\frac{1}{\sqrt{i}} C(\frac{1}{\sqrt{i}})}$.
Since the curvature operator of $g_i$ is bounded from below by $-Id$, applying Bochner formula to $\alpha_i$ \cite{P}, we get
\begin{equation} \label{13}
\frac{1}{2}\Delta |\alpha_i|^2 \geq |\nabla \alpha_i|^2  - |d\alpha_i|^2 - |d^*\alpha_i|^2 - C_n |\alpha_i|^2
\end{equation}
for some positive constant $C_n$ depending only on $n$.

\begin{lem} \label{os}
$$t_i^2 |X_i|^2 |\alpha_i|^2=|d\alpha_i|^2 + |d^*\alpha_i|^2.$$
\end{lem}

\begin{proof}
Firstly, we have
$$t_i^{2}|X_i|^{2}|\alpha_i|^{2} d V_i= t_i \theta_i \wedge i_{t_i X_i} (\alpha_i \wedge * \alpha_i)$$
$$=t_i^2 \theta_i \wedge i_{X_i} \alpha_i \wedge * \alpha_i + (-1)^p t_i^2 \theta_i \wedge \alpha_i \wedge i_{X_i} (* \alpha_i)$$
\begin{equation} \label{13.5}
=(-1)^{p-1} t_i^2  i_{X_i} \alpha_i \wedge \theta_i \wedge * \alpha_i + (-1)^p t_i^2 \theta_i \wedge \alpha_i \wedge i_{X_i} (* \alpha_i).
\end{equation}
By Lemma \ref{com}, we get
\begin{equation}
*i_{X_i} \alpha_i=(-1)^{p-1}\theta_i \wedge * \alpha_i;
\end{equation}
\begin{equation}
*i_{X_i} (*\alpha_i)=(-1)^{n-p-1}\theta_i \wedge ** \alpha_i=(-1)^{n-p-1}(-1)^{np+p}\theta_i \wedge  \alpha_i.
\end{equation}
Hence
\begin{equation}\label{14}
\theta_i \wedge * \alpha_i=(-1)^{p-1} *i_{X_i} \alpha_i
\end{equation}
\begin{equation} \label{15}
i_{X_i} (*\alpha_i)=(-1)^{n(n-p-1)+n-p-1} **i_{X_i} (*\alpha_i)=(-1)^p *(\theta_i \wedge  \alpha_i).
\end{equation}
By \ref{13.5}, \ref{14}, \ref{15}, we get
\begin{equation}
t_i^{2}|X_i|^{2}|\alpha_i|^{2} dV_i =t_i^2 i_{X_i} \alpha_i \wedge *(i_{X_i} \alpha_i)+ t_i^2 \theta_i \wedge \alpha_i \wedge *(\theta_i \wedge \alpha_i)
=\left(t_i^{2}\left|i_{X_i} \alpha_i\right|^{2}+t_i^{2}|\theta_i \wedge \alpha_i|^{2}\right) d V_i.
\end{equation}
Since $d \alpha_i + t_i \theta_i \wedge \alpha_i=0, d^* \alpha_i +i_{t_i X_i} \alpha_i=0$,
we get
$$t_i^2 |X_i|^2 |\alpha_i|^2=|d\alpha_i|^2 + |d^*\alpha_i|^2.$$
\end{proof}
Given Lemma \ref{os}, we have
\begin{equation} \label{18}
\frac{1}{2}\Delta |\alpha_i|^2 \geq |\nabla \alpha_i|^2  - t_i^2 |X_i|^2 |\alpha_i|^2 - C_n |\alpha_i|^2.
\end{equation}
By Kato's inequality, we have $|\nabla \alpha_i|\geq |\nabla|\alpha_i||$. It follows that
\begin{equation} \label{414}
|\alpha_i| \Delta |\alpha_i| \geq  -(t_i^2 |X_i|^2+ C_n ) |\alpha_i|^2 \geq -(t_i^2 |X_i|^2_{\infty}+ C_n ) |\alpha_i|^2.
\end{equation}
Apply Theorem \ref{sbl} to $|\alpha_i|$, we get
$$|\alpha_i|^2_{\infty}=:max_{x \in M^n}|\alpha_i|^2(x) \leq B_n( \sigma_n R_i (t_i^2 |X_i|^2_{\infty}+ C_n ) ^{\frac{1}{2}}) \frac{\int_{M^n} |\alpha_i|^2 dV_i}{V(g_i)}.$$
\end{proof}

\begin{lem} \label{bb}

\begin{equation} \label{22}
\frac{ \int_{M^n}  |X_i|^2 dV_i}{V(g_i)} \int_{M^n} |\alpha_i|^2 dV_i \leq  \int_{M^n}  |X_i|^2 |\alpha_i|^2 dV_i  +
\frac{2 C_n |\alpha_i|^2_{\infty}   }{\sqrt{i}}  R_i \sqrt{B_n( \sigma_n R_i  \sqrt{\frac{n-1}{i}})} \int_{M^n} |X_i|^2 dV_i
\end{equation}
for some constant $C_n$ depending only $n$.
\end{lem}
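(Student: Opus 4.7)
The plan is to rewrite the claimed inequality as a statement about how $|\alpha_i|^2$ weighted oscillation of $|X_i|^2$ is controlled. Setting $\overline{|X_i|^2}:=V(g_i)^{-1}\int_{M^n}|X_i|^2 dV_i$, the inequality is equivalent to
$$\int_{M^n}\bigl(\overline{|X_i|^2}-|X_i|^2\bigr)|\alpha_i|^2 dV_i \;\leq\; \frac{2C_n|\alpha_i|_{\infty}^2}{\sqrt{i}}\,R_i\sqrt{B_n\bigl(\sigma_n R_i\sqrt{(n-1)/i}\bigr)}\int_{M^n}|X_i|^2 dV_i.$$
Pulling the $L^\infty$ norm of $|\alpha_i|^2$ out reduces the task to an $L^1$ estimate on $|X_i|^2 - \overline{|X_i|^2}$:
$$\int_{M^n}\bigl|\,|X_i|^2-\overline{|X_i|^2}\,\bigr|\, dV_i \;\leq\; \frac{2C_n}{\sqrt{i}}\,R_i\sqrt{B_n(\sigma_n R_i\sqrt{(n-1)/i})}\int_{M^n}|X_i|^2 dV_i.$$

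To prove this $L^1$ estimate, I would apply the Poincar\'e inequality from Theorem \ref{sob} with $p=q=2$ (legitimate because $n\geq 3$, so $2\leq \tfrac{2n}{n-2}$, and $r_{min}(g_i)D^2(g_i)\geq -(n-1)/i$ gives $b=1/\sqrt{i}$). This yields
$$\bigl\|\,|X_i|^2-\overline{|X_i|^2}\,\bigr\|_{L^2}\;\leq\; R_i\,\Sigma(n,2,2)\,\bigl\|d|X_i|^2\bigr\|_{L^2},$$
since $S_{2,2}=R_i\Sigma(n,2,2)$. Cauchy--Schwarz then converts this into the $L^1$ bound
$$\int_{M^n}\bigl|\,|X_i|^2-\overline{|X_i|^2}\,\bigr|\,dV_i \;\leq\; V(g_i)^{1/2}\,R_i\,\Sigma(n,2,2)\,\bigl\|d|X_i|^2\bigr\|_{L^2}.$$

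It remains to estimate $\|d|X_i|^2\|_{L^2}$. Writing $d|X_i|^2=2|X_i|\,d|X_i|$ and invoking Kato's inequality $|d|X_i||\leq|\nabla X_i|$ gives the pointwise bound $|d|X_i|^2|\leq 2|X_i||\nabla X_i|$. Combined with the Bochner consequence (4.10), namely $\int|\nabla X_i|^2\,dV_i \leq \tfrac{n-1}{i}\int |X_i|^2\,dV_i$, and the $L^\infty$ estimate (4.13) of Lemma \ref{es}, namely $|X_i|_\infty^2\leq B_n(\sigma_n R_i\sqrt{(n-1)/i})\,V(g_i)^{-1}\int|X_i|^2 dV_i$, this yields
$$\bigl\|d|X_i|^2\bigr\|_{L^2}^2 \;\leq\; 4|X_i|_{\infty}^2\int_{M^n}|\nabla X_i|^2\,dV_i \;\leq\; \frac{4(n-1)}{i\,V(g_i)}\,B_n\bigl(\sigma_n R_i\sqrt{(n-1)/i}\bigr)\Bigl(\int_{M^n}|X_i|^2 dV_i\Bigr)^{2}.$$

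Taking square roots, substituting back, and absorbing $\Sigma(n,2,2)\sqrt{n-1}$ into a constant $C_n$ depending only on $n$ produces exactly the target inequality. The estimate is essentially a sharp bookkeeping argument: the only genuine input is the Poincar\'e inequality in the appropriate $L^2$--$L^2$ form. The mild obstacle is matching constants, and in particular ensuring that the volume factor from $V(g_i)^{1/2}\cdot|X_i|_\infty \leq V(g_i)^{-1/2}\sqrt{B_n(\cdots)}\int|X_i|^2\,dV_i$ produces the clean factor $\sqrt{B_n(\cdots)}\int|X_i|^2\,dV_i$ appearing in the right-hand side, which it does.
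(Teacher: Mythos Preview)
Your proof is correct and follows essentially the same route as the paper: pull out $|\alpha_i|_\infty^2$, apply Cauchy--Schwarz to pass from $L^1$ to $L^2$, use the $p=q=2$ Poincar\'e inequality of Theorem~\ref{sob}, bound $|d|X_i|^2|$ via Kato's inequality, and then invoke the $L^\infty$ bound \eqref{211} on $|X_i|$ together with the Bochner estimate \eqref{20} on $\int|\nabla X_i|^2$. The only cosmetic difference is the order in which you first extract $|\alpha_i|_\infty^2$ and then apply Cauchy--Schwarz, whereas the paper combines these into a single Cauchy--Schwarz step against $|\alpha_i|^2$; the resulting chain of inequalities is the same.
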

\begin{proof}
Let $h_i=|X_i|^2$ and $\overline{h_i}= \frac{\int_{M^n}  |X_i|^2 dV_i }{V(g_i)}$. By Theorem \ref{sob} in the case $p=q=2$, we get
$$\int_{M^n} |h_i - \overline{h_i}||\alpha_i|^2 dV_i \leq |\alpha_i|^2_{\infty} (\int_{M^n} |h_i - \overline{h_i}|^2 dV_i)^{\frac{1}{2}} (V(g_i))^{\frac{1}{2}}$$
$$\leq C_n  |\alpha_i|^2_{\infty} R_i  (\int_{M^n} |\nabla h_i|^2 dV_i)^{\frac{1}{2}} (V(g_i))^{\frac{1}{2}}$$
$$=2 C_n  |\alpha_i|^2_{\infty} R_i  (\int_{M^n} |X_i|^2 |\nabla |X_i||^2| dV_i)^{\frac{1}{2}} (V(g_i))^{\frac{1}{2}} $$
$$\leq 2 C_n  |\alpha_i|^2_{\infty} R_i (\int_{M^n} |X_i|^2 |\nabla X_i|^2 dV_i)^{\frac{1}{2}} (V(g_i))^{\frac{1}{2}} $$
$$\leq  2 C_n  |\alpha_i|^2_{\infty} R_i |X_i|_{\infty} (V(g_i))^{\frac{1}{2}}  (\int_{M^n} |\nabla X_i|^2 dV_i)^{\frac{1}{2}} $$
$$\leq  2 C_n  |\alpha_i|^2_{\infty} R_i \sqrt{B_n( \sigma_n R_i  \sqrt{\frac{n-1}{i}})} (\int_{M^n} |X_i|^2 dV_i)^{\frac{1}{2}}  (\int_{M^n} |\nabla X_i|^2 dV_i)^{\frac{1}{2}}$$
$$\leq  \frac{2 C_n |\alpha_i|^2_{\infty}  } {\sqrt{i}} R_i \sqrt{B_n( \sigma_n R_i  \sqrt{\frac{n-1}{i}})} \int_{M^n} |X_i|^2 dV_i.$$
 It follows that
$$ \frac{ \int_{M^n}  |X_i|^2 dV_i}{V(g_i)} \int_{M^n} |\alpha_i|^2 dV_i \leq  \int_{M^n}  |X_i|^2 |\alpha_i|^2 dV_i  +
\frac{2 C_n |\alpha_i|^2_{\infty}  } {\sqrt{i}} R_i \sqrt{B_n( \sigma_n R_i  \sqrt{\frac{n-1}{i}})} \int_{M^n} |X_i|^2 dV_i. $$

\end{proof}

\begin{lem}
Let $C(b)$ be the function defined in Theorem \ref{sob}. Namely, $C(b)$ is the unique positive root of the equation
$$ x \int_0^{b}(cht + x sht)^{n-1} dt=\int_0^{\pi} sin^{n-1}t dt.$$
Then
\begin{equation} \label{28}
 \liminf_{b \rightarrow 0} b C(b) \geq
  a_n>0
\end{equation}
for some constant $a_n$ depending only on $n$.
\end{lem}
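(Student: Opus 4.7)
The plan is to rescale the defining integral by setting $t = bs$ and introducing $y := bC(b)$, which turns the equation into
$$y \int_0^1 \left(\cosh(bs) + \tfrac{y}{b}\sinh(bs)\right)^{n-1} ds = K_n, \quad K_n := \int_0^\pi \sin^{n-1} t \, dt.$$
Since $b^{-1}\sinh(bs)$ converges uniformly to $s$ on $[0,1]$ and $\cosh(bs) \to 1$ as $b \to 0$, the integrand converges to $(1+ys)^{n-1}$ provided that $y$ stays in a bounded subset of $(0,\infty)$. The value of $y$ in the limit is then forced by a simple algebraic identity, which pins down $a_n$ explicitly.

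First I would verify that $y(b) := bC(b)$ is trapped in a fixed compact subinterval of $(0,\infty)$ as $b \to 0$. The map $F_b(x) := x\int_0^b (\cosh t + x \sinh t)^{n-1} dt$ is strictly increasing from $0$ to $\infty$ on $x > 0$, so $C(b)$ is the unique solution of $F_b(C(b)) = K_n$. The substitution $t = bs$ shows that
$$F_b(M/b) = M \int_0^1 \left(\cosh(bs) + \tfrac{M}{b}\sinh(bs)\right)^{n-1} ds \xrightarrow{b \to 0} \frac{(1+M)^n - 1}{n}$$
for any fixed $M > 0$. Choosing $M_+$ large enough that the limit exceeds $K_n$ yields $y(b) < M_+$ for small $b$, and choosing $M_-$ small enough that the limit is below $K_n$ gives $y(b) > M_-$ for small $b$; monotonicity of $F_b$ in $x$ does the bookkeeping.

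With $y(b)$ confined to $[M_-, M_+]$, the integrand in the rescaled equation is bounded uniformly in $b$ and $s \in [0,1]$, and converges uniformly to $(1+ys)^{n-1}$. Pushing any subsequential limit $y(b_k) \to y^*$ through the integral gives
$$y^* \int_0^1 (1+y^* s)^{n-1} ds = \frac{(1+y^*)^n - 1}{n} = K_n,$$
so $y^* = (1 + nK_n)^{1/n} - 1 =: a_n$. Since $K_n > 0$ forces $a_n > 0$ and every subsequential limit equals $a_n$, we obtain $\lim_{b \to 0} bC(b) = a_n > 0$, which is stronger than the claimed $\liminf$ bound.

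The main obstacle is the two-sided a priori bound on $y(b)$; once it is in place the rest is a routine uniform convergence argument, since all integrands live on the fixed compact interval $[0,1]$ and are explicit elementary functions of $(b, s, y)$. No delicate cancellation or singular behavior arises, because the parameter scaling $y = bx$ is precisely the one that balances the two sides of the original equation in the limit $b \to 0$.
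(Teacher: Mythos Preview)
Your argument is correct and in fact stronger than what the paper proves: by rescaling $t=bs$ and showing that $y(b)=bC(b)$ is trapped in a compact subinterval of $(0,\infty)$, you extract the precise limit $\lim_{b\to 0} bC(b)=(1+nK_n)^{1/n}-1>0$, whereas the paper only needs and only proves a positive lower bound for the $\liminf$.

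The paper's route is different and more elementary. Writing $\omega_n=\int_0^\pi \sin^{n-1}t\,dt$, it first uses the trivial pointwise bound $\cosh t + C(b)\sinh t \geq 1$ to get $\omega_n \geq bC(b)$, and then, for small $b$, the crude upper bounds $\cosh t \leq \cosh 1$ and $\sinh t \leq 2t$ on $[0,b]$ to obtain
\[
\omega_n \leq bC(b)\Bigl(\tfrac{e+e^{-1}}{2}+2\,bC(b)\Bigr)^{n-1} \leq bC(b)\Bigl(\tfrac{e+e^{-1}}{2}+2\omega_n\Bigr)^{n-1},
\]
which immediately yields $bC(b)\geq a_n:=\omega_n\bigl(\tfrac{e+e^{-1}}{2}+2\omega_n\bigr)^{1-n}$. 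So the paper avoids any limit or convergence argument entirely, trading your exact asymptotics for a two-line inequality chain. Your approach gives sharper information at the cost of the compactness/subsequence machinery; the paper's approach is quicker and suffices for the application, where only a uniform upper bound on $R_i=D(g_i)/(\tfrac{1}{\sqrt{i}}C(\tfrac{1}{\sqrt{i}}))$ is needed.
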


\begin{proof}
Let $\omega_n=\int_0^{\pi} sin^{n-1}t dt.$ Then
$$\omega_n = C(b) \int_0^{b}(cht + C(b) sht)^{n-1} dt = C(b) \int_0^{b}(\frac{e^t + e^{-t}}{2} + C(b) \frac{e^t-e^{-t}}{2})^{n-1} dt \geq  C(b) b.$$
On the other hand, for any sequence $b_i \rightarrow 0$, we have
$$\omega_n= C(b_i) \int_0^{b_i}(\frac{e^t + e^{-t}}{2} + C(b_i) \frac{e^t-e^{-t}}{2})^{n-1} dt  $$
$$ \leq C(b_i) \int_0^{b_i} (\frac{e+ e^{-1}}{2}+ C(b_i) \frac{e^t-e^{-t}}{2})^{n-1} dt  $$
$$\leq C(b_i) b_i  (\frac{e+ e^{-1}}{2}+  2 b_i C(b_i))^{n-1}$$
$$\leq C(b_i) b_i (\frac{e+ e^{-1}}{2}+  2 \omega_n)^{n-1}$$
Hence for some constant $a_n$ depending only on $n$, we have
$$ \liminf_{b \rightarrow 0} b C(b) \geq a_n>0$$

\end{proof}

By \ref{21}, \ref{211}, \ref{212} and \ref{22}, we get
$$\frac{ \int_{M^n} t_i^2 |X_i|^2 dV_i}{V(g_i)} \int_{M^n} |\alpha_i|^2 dV_i \leq  \int_{M^n}  t_i^2 |X_i|^2 |\alpha_i|^2 dV_i  +
\frac{2 C_n |\alpha_i|^2_{\infty}   }{\sqrt{i}} R_i \sqrt{B_n( \sigma_n R_i  \sqrt{\frac{n-1}{i}})}  \int_{M^n} t_i^2 |X_i|^2 dV_i.$$
$$\leq \frac{C_n}{\sqrt{i}} |\alpha_i|_{\infty} (\int_{M^n} t_i^2 |X_i|^2 dV_i )^{\frac{1}{2}} (\int_{M^n}|\alpha_i|^2 dV_i)^{\frac{1}{2}}+
\frac{2 C_n |\alpha_i|^2_{\infty}  }{\sqrt{i}} R_i \sqrt{B_n( \sigma_n R_i  \sqrt{\frac{n-1}{i}})}  \int_{M^n} t_i^2 |X_i|^2 dV_i$$
$$\leq \frac{C_n \sqrt{B_n( \sigma_n R_i (t_i^2 |X_i|^2_{\infty}+ C_n)^{\frac{1}{2}})}}{\sqrt{i}} \sqrt{\frac{\int_{M^n} t_i^2 |X_i|^2 dV_i } {V(g_i)}}
 \int_{M^n}|\alpha_i|^2 dV_i $$
\begin{equation} \label{final}
+ \frac{2 C_n  B_n(\sigma_n R_i (t_i^2 |X_i|^2_{\infty}+C_n)^{\frac{1}{2}})}{\sqrt{i}} R_i \sqrt{B_n( \sigma_n R_i  \sqrt{\frac{n-1}{i}})}  \frac{ \int_{M^n} t_i^2 |X_i|^2 dV_i} {V(g_i)} \int_{M^n} |\alpha_i|^2 dV_i,
\end{equation}
where
$$|X_i|^2_{\infty}=:max_{x \in M^n}|X_i|^2(x) \leq B_n( \sigma_n R_i {\sqrt\frac{n-1}{i}}) \frac{\int_{M^n} |X_i|^2 dV_i}{V(g_i)}.$$

As $t_i=(\frac{V(g_i)}{\int_{M^n} |X_i|^2 dV_i})^{1/2}$, we see
\begin{equation} \label{29}
\frac{\int_{M^n} t_i^2 |X_i|^2 dV_i}{V(g_i)}=1.
\end{equation}
Recall that $R_i =\frac{D(g_i)}{\frac{1}{\sqrt{i}} C(\frac{1}{\sqrt{i}})}$ and $D(g_i) \leq 1$.
By \ref{28}, \ref{final} and \ref{29}, using the properties of $B_n(x)$ in Theorem \ref{sbl}, we see that for sufficiently large $i$,
$$ \int_{M^n} |\alpha_i|^2 dV_i \leq  \frac{1}{2}\int_{M^n} |\alpha_i|^2 dV_i.$$
Hence $\alpha_i \equiv 0$ and $H^p (M^n, t_i \theta_i )=0$ when $n \geq 3.$

\end{document}